%%%%%%%%%%%%%%%%%%%%%%%%%%%%%%%%%%%%%%%%%%%%%%%%%%%%%%%%%%%%%%%%%%%%%%%%%%%%%%%%%%%%%%
%
%Last Edited 11/8/09 by SF
%
%%%%%%%%%%%%%%%%%%%%%%%%%%%%%%%%%%%%%%%%%%%%%%%%%%%%%%%%%%%%%%%%%%%%%%%%%%%%%%%%%%%%%
\documentclass{amsart}

% choose options for [] as required from the list
% in the Reference Guide

%\usepackage{mathptmx}       % selects Times Roman as basic font
%\usepackage{helvet}         % selects Helvetica as sans-serif font
%\usepackage{courier}        % selects Courier as typewriter font
%\usepackage{type1cm}        % activate if the above 3 fonts are
                            % not available on your system
%
%\usepackage{makeidx}         % allows index generation
%\usepackage{graphicx}        % standard LaTeX graphics tool
                             % when including figure files
                   % Do NOT use it !

%\usepackage{multicol}        % used for the two-column index
%\usepackage[bottom]{footmisc}% places footnotes at page bottom

% see the list of further useful packages
% in the Reference Guide
\usepackage{graphicx}
\usepackage{float}
\usepackage{hyperref}
\usepackage{amssymb}
\usepackage{amsmath}
\usepackage{bbm,bm}
\usepackage[all]{xy}

%%%%%%%%%%%%%%%%%%%%%%%%%%%%%%%%%%%%%%%%%%%%%%%%%%%%%%%%%%%%%%%%%%%%%%%%%%%%%%%%%%%%%
%  Group Notation
%%%%%%%%%%%%%%%%%%%%%%%%%%%%%%%%%%%%%%%%%%%%%%%%%%%%%%%%%%%%%%%%%%%%%%%%%%%%%%%%%%%%%

\def\Fs{\mathfrak S}
\def\Fm{\mathcal M}
\def\Fy{\mathcal Y}
\def\Fq{\mathcal Q}
\def\Fck{\mathcal C \mathcal K}
\def\Fssym{\mathfrak SSym}
\def\FC{\mathfrak C}
\def\Fpsym{\mathcal PSym}
\def\Fysym{\mathcal YSym}

\newcommand{\FsetS}{{\sf S}}
\newcommand{\FsetT}{{\sf T}}
\newcommand{\FsetP}{{\sf P}}
\newtheorem{theorem}{Theorem}[section]
\newtheorem{defi}[theorem]{Definition}
\newtheorem{exam}[theorem]{Example}
\newtheorem{rema}[theorem]{Remark}
%%% Change the Theorem Style for the Last Three %%%
\newenvironment{definition}[1][]{\rm\begin{defi}[#1]\rm}{\end{defi}}
\newenvironment{example}[1][]{\rm\begin{exam}[#1]\rm}{\end{exam}}
\newenvironment{remark}[1][]{\rm\begin{rema}[#1]\rm}{\end{rema}}

%%%%%%%%%%%%%%%%%%%%%%%%%%%%%%%%
%%% DEF: compositional degrees %%%
\newcommand{\FDdeg}[1]{(#1)}

\newcommand{\FPlaceholder}%
    {\raisebox{.09ex}{\footnotesize $\bullet$}}

\def\Fbb{\Fboldbullet}
    \def\Fboldbullet{{\!\mbox{\LARGE$\mathbf\cdot$}}}

\def\Fpsplit{\stackrel{\curlyvee}{\to}}

%%%%%%%%%%%%%%%%%%%%%%%%%%%%%%%
%%% DEF: compositional coalgebra notation %%%
\def\Ftreeseparator{\bm\cdot}
\def\Ftreeseparatorinline{\bm\cdot}
\newcommand{\Fzcompose}[2]{\frac{{#1}}{#2}}
\newcommand{\Fcompose}[3]{\frac{{#1{\Ftreeseparator}\dotsb{\Ftreeseparator}#2}}{#3}}
\newcommand{\Flrcompose}[5]{\frac{{#1{\Ftreeseparator}#2{\Ftreeseparator}\dotsb{\Ftreeseparator}#3{\Ftreeseparator}#4}}{#5}}
\newcommand{\Fcomposeinline}[3]{{({#1{\Ftreeseparatorinline}\dotsb{\Ftreeseparatorinline}#2}){\,\circ\,}#3}}
%%%%%%%%%%%%%%%%%%%%%%%%%%%%%%%%

%%%%%%%%%%%%%%%%%%%%%%%%%%%%%%%%%%%%%%%%%%%%%%%%%%%%%%%%%%%%%%%%%%%%%%%%%%%%%%%%%%%%%%%%%%%
%
%  paper formatting
%
%%%%%%%%%%%%%%%%%%%%%%%%%%%%%%%%%%%%%%%%%%%%%%%%%%%%%%%%%%%%%%%%%%%%%%%%%%%%%%%%%%%%%%%%%%%

%%%%%%%%%%%%%%%%%%%%%%%%%%%%%%%%%%%%%%%%%%%%%%%%%%%%%%%%%%%%%%%%%%%%%%%%%%%%%%%%%%%%%
%NOTE: The \baselineskip=15pt below is the minimum needed to make the paper look decent.
%      There are many superscripts {\tilde, \overline} which cause erratic
%      spaces between sentences. \baselineskip corrects these problems.  Also see the
%      \baselineskip=12pt by References.
%%%%%%%%%%%%%%%%%%%%%%%%%%%%%%%%%%%%%%%%%%%%%%%%%%%%%%%%%%%%%%%%%%%%%%%%%%%%%%%%%%%%%

\baselineskip=15pt

%
%  05E05 Symmetric functions
%  16W30 Coalgebras, bialgebras, Hopf algebras
%  18D50 Operads
%-------------------------------------------------------------------
% Begin DOCUMENT
%-------------------------------------------------------------------
\begin{document}

%\keywords{multiplihedron, composihedron, binary tree, cofree
%coalgebra, one-sided Hopf algebra, operads, species}
%\subjclass[2000]{05E05, 16W30,  18D50}

\title{Extending the Tamari lattice to some compositions of species}
%\titlerunning{Tree compositions}% for an abbreviated version of
% your contribution title if the original one is too long
\author{Stefan Forcey} \address[S. Forcey]{
    Department of Mathematics\\
    The University of Akron\\
    Akron, OH 44325-4002
    }
    \email{sf34@uakron.edu}
    \urladdr{http://www.math.uakron.edu/\~{}sf34/}
 %\urladdr{http://www.math.uakron.edu/\~{}sf34/}
%
% Use the package "url.sty" to avoid
% problems with special characters
% used in your e-mail or web address
%

%\abstract*{}
\begin{abstract}
An extension of the Tamari lattice to the multiplihedra is discussed,
  along with projections to the composihedra and the Boolean lattice.
   The multiplihedra and composihedra are sequences of polytopes that  arose in algebraic topology
    and category theory. Here we describe them in terms of the composition of combinatorial species.
    We define lattice structures on their vertices, indexed by painted trees,
    which are extensions of the Tamari lattice and projections of the weak order on
    the permutations. The projections from the weak order to the
    Tamari lattice and the Boolean lattice are shown to be different
     from the classical ones. We review how lattice structures often
     interact with the Hopf algebra structures, following Aguiar and Sottile who
      discovered the applications of M\"obius inversion on the Tamari lattice to the Loday-Ronco Hopf algebra.
\end{abstract}
\maketitle
%********************************************************************************************
%********************************************************************************************
%Intro
%********************************************************************************************
%********************************************************************************************
\section{Introduction}
We will be looking at the following spectrum of polytopes:
\[
    \includegraphics[width = \textwidth]{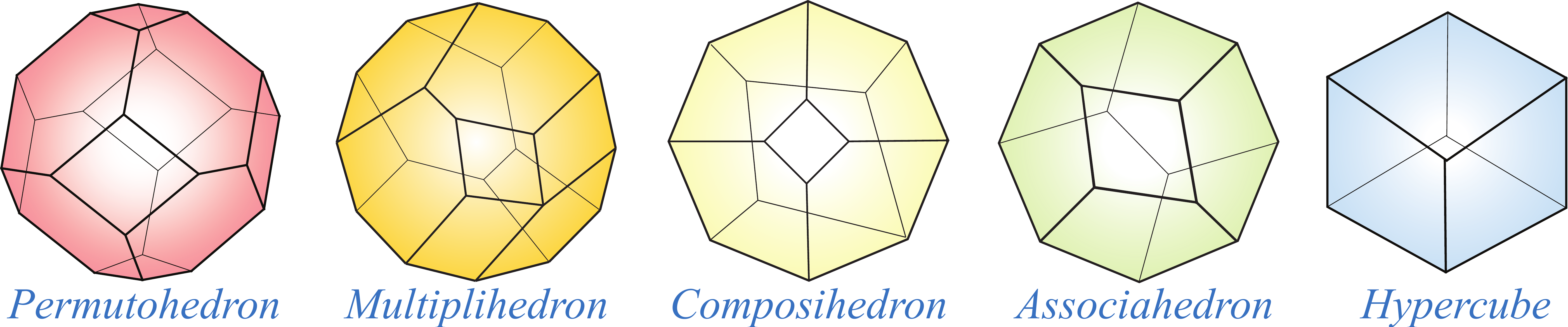}
\]
%\index{multiplihedron} \index{permutohedron}
%\index{composihedron}\index{associahedron}

Here is a \emph{planar rooted binary tree}, often called a binary
tree:
\[
    \includegraphics[width = 3in]{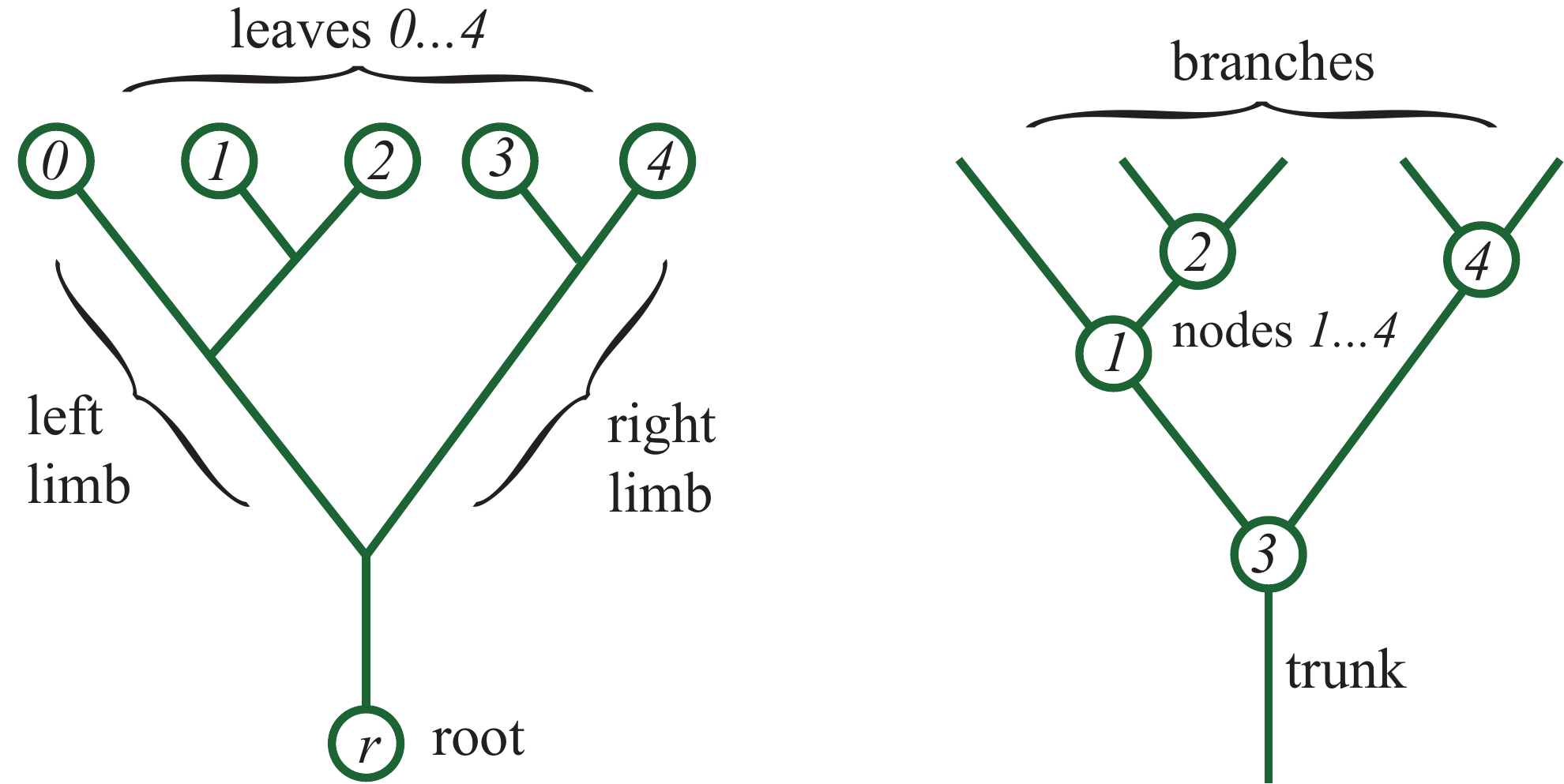}
\]
We only label the leaves and interior nodes (branch points) with
their left-to-right ordering when necessary. The branches are the
edges with a leaf. The nodes are also partially ordered by their
proximity to the root, which is maximal; in the picture node $3 > 1
> 2.$ The set of planar rooted binary trees with $n$ nodes and $n+1$
leaves is denoted $\Fy_n.$ %\index{tree!binary}

\subsubsection{Notation}  We will choose from the current
rather prolix notation used for the classical polytopes and
lattices, and try to decrease the proliferation of symbols by
referring to a polytope and its associated orders by a common name.
The context will determine whether we are focused on the face
structure, the vertices alone, or the 1-skeleton. Since the vertices
are used more often, we let that be the default meaning, and add
more specifics if necessary. For instance if we wish to refer to the
general planar trees that index the faces of the associahedron,
we'll make that clear.

 The Tamari lattice  is denoted  either
$\Fy_n$ or ${\mathbb T}_n.$ The set of painted trees with $n$ nodes and $n+1$
leaves is denoted $\Fm_n$ and the lattice structure on that set is denoted
$\Fm_n$ as well. The set of binary trees with leaves weighted by positive
integers summing to $n+1$ is denoted $\Fck_n$ and the lattice structure we
define on that set is denoted $\Fck_n$ as well. Let $[n] = \{1,\dots,n\}.$ The
Boolean lattice of subsets of $[n]$ is denoted $\Fq_n.$ The lattice of weakly
ordered permutations on $[n]$ , described below, is denoted $\Fs_n.$ We also
continue this abusive notation by using the same symbols to denote the
polytopes whose vertices are indexed by the indicated set. Thus the $(n-1)$
-dimensional associahedron is denoted $\Fy_n,$ which corresponds to the
notation ${\mathcal{K}}_n$ in \cite{F_LodRon:1998} or ${\mathcal{K}}(n+1)$ in
\cite{F_forcey1} or even ${K}_{n+1}$ in Stasheff's original notation. The
$(n-1)$-dimensional permutohedron, multiplihedron, composihedron and hypercube
are denoted $\Fs_n, ~\Fm_n,  ~\Fck_n,$ and $\Fq_n$ respectively. Rather than a
subscript $n$, we sometimes use a placeholder $\bullet$ to refer to the entire
sequence at once.

 The 1-skeleta of the families of polytopes
$\Fs_\Fbb,\Fm_\Fbb$, $\Fy_\Fbb$ and $\Fq_\Fbb$ are Hasse diagrams of posets.
For the permutahedron $\Fs_n$, the corresponding poset is the (left) \emph{weak
order}, which we describe in terms of permutations. A cover in the weak order
has the form $w\lessdot (k,k{+}1) w$, where $k$ precedes $k{+}1$ among the
values of $w$. Figure~\ref{F_f:bigthree} displays the weak order on $\Fs_4$,
the Tamari order on $\Fy_4$ and the Boolean lattice $\Fq_3.$
\begin{figure}[hbt]
\[\includegraphics[width = \textwidth]{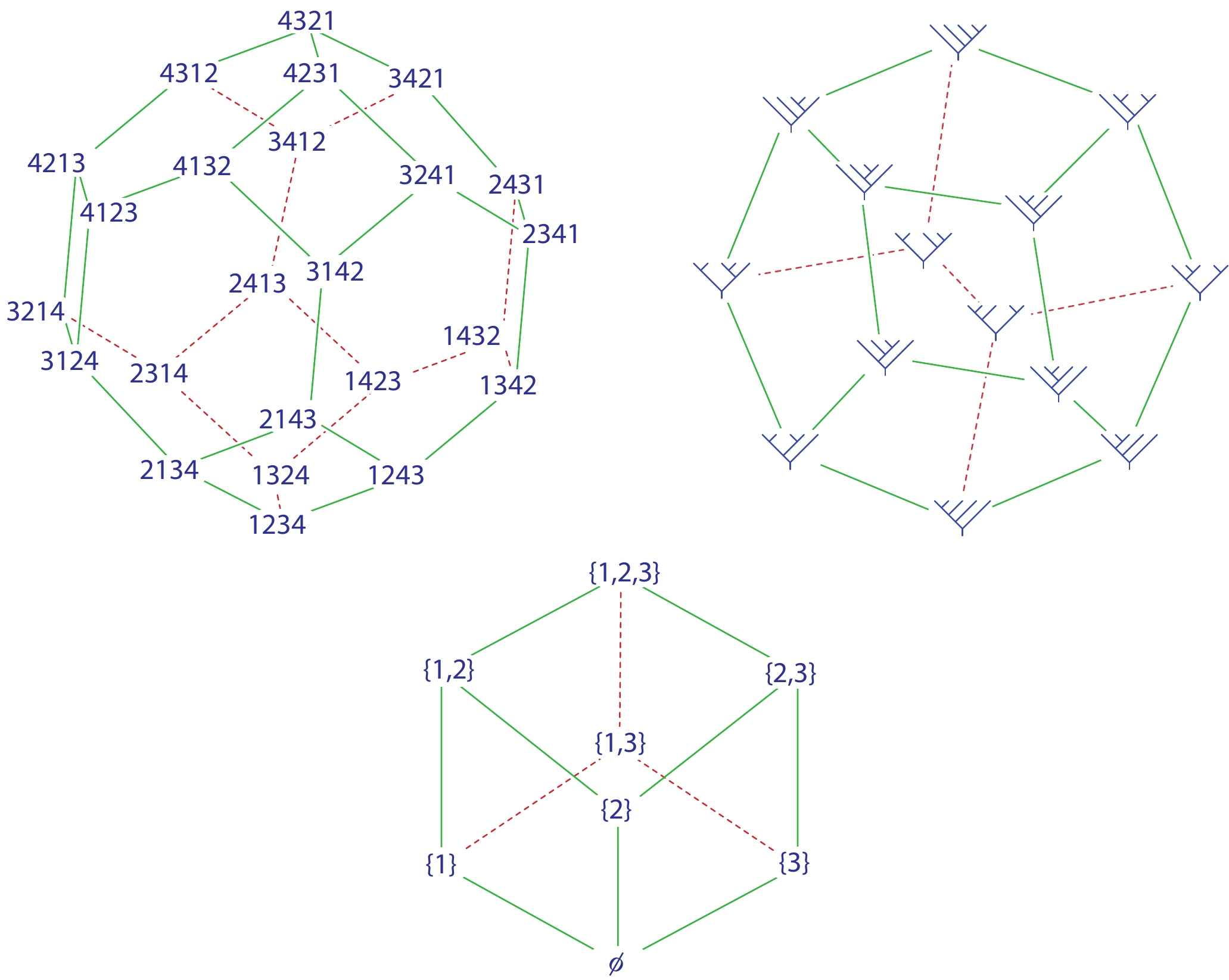}
\]
\caption{Hasse diagrams of three classical lattices in this paper: Weak order
on $\Fs_4$, Tamari lattice $\Fy_4$ and Boolean lattice $\Fq_3$. }
\label{F_f:bigthree}
\end{figure} %\index{lattice!Tamari}\index{lattice!Boolean}\index{lattice!permutations}

\subsection{Species}
A combinatorial \emph{species} of sets is an endofunctor of Finite
Sets with bijections. %\index{species}
\begin{example}
 The species ${\mathcal L}$ of lists takes a set to
linear orders of that set.
  $${\mathcal L}(\{a,d,h\}) =
  \{a<d<h,~{a<h<d,}~{h<a<d,}~{h<d<a,}~{d<a<h,}~{d<h<a}~
  \}$$
\end{example}

\begin{example}

The species ${\Fy}$ of binary trees takes a set to
  trees with labeled leaves.

$$
\hspace{-.75in}\includegraphics[width=3in]{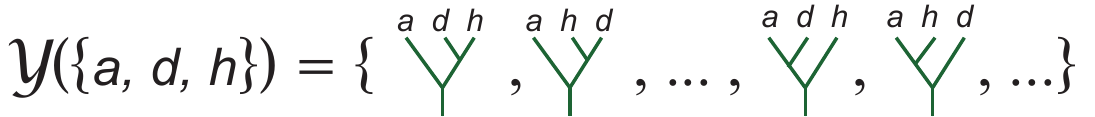}
$$
\end{example}

We define the composition of two species following Joyal in \cite{Joyal}:
%\index{species!composition}

  $$({\mathcal G} \circ {\mathcal H})(U) = \bigsqcup_{\pi} {\mathcal G}(\pi)\times\prod_{U_i \in \pi } {\mathcal H}(U_i) $$

  where the union is over partitions of $U$ into any number of nonempty disjoint parts.

  $$\pi= \{U_1,U_2,\dots,U_n\}\text{ such that } U_1 \sqcup \dots \sqcup U_n =
  U.$$

This formula also appears to be known as the cumulant formula, the
moment sequence of a random variable, and the domain for operad
composition:
$$\gamma: \mathcal{F} \circ \mathcal{F} \to \mathcal{F}.$$
%\index{operad}

\section{Several flavors of trees}
\subsection{Ordered, Bi-leveled and Painted trees}\label{F_sec: bi-leveled trees}
Many variations of the idea of the binary tree have proven useful in
applications to algebra and topology. Each variation we mention can
have its leaves labeled, providing an example of a set species.

 An ordered tree (sometimes
called leveled) has a vertical ordering of the $n$ nodes as well as
horizontal. This allows a well-known bijection between the ordered
trees with $n$ nodes and the permutations $\Fs_n.$ We will call this
bijection $bij_1$. This bijection and all the other maps we will
discuss are demonstrated in Figure~\ref{F_f:mapmap}.

As defined in 2.1 of \cite{F_FLS:2010},
 a \emph{bi-leveled tree}
$(t;\FsetT)$ is a planar binary tree $t\in \Fy_n$ together with an
(upper) order ideal $\FsetT$ of its node poset, where $\FsetT$
contains the leftmost node of $t$ as a minimal element. (Recall that
an upper order ideal is a sub-poset such that $x > y\in \FsetT $
implies $x \in \FsetT.$) We draw the underlying tree $t$ and circle
the nodes in $\FsetT$. By the condition on $\FsetT$, all nodes along
the leftmost branch are circled and none are circled above the
leftmost node.

Saneblidze and Umble~\cite{F_SanUmb:2004} introduced bi-leveled
trees in terms of equivalence classes on ordered trees. They
describe a cellular projection from the permutahedra to Stasheff's
multiplihedra $\Fm_\Fbb$, with the bi-leveled trees on $n$ nodes
indexing the vertices $\Fm_n$.

\begin{definition} We denote Saneblidze and Umble's map as $\beta: \Fs_n\to \Fm_n,$
as in \cite{F_FLS:2010}, and describe it as the map which first
circles all the nodes vertically ordered below and including the
leftmost node, and then forgets the vertical ordering of the nodes.
\end{definition}

Numbering the nodes in a tree $t\in\Fy_n$ $1,\dotsc,n$ from left to
right, $\FsetT$ becomes a subset of $\{1,\dotsc,n\}$.
\begin{definition}
The partial order on $\Fm_n$ is defined by $(s;\FsetS)\leq(t;\FsetT)$ if $s\leq
t$ in $\Fy_n$ and $\FsetT\subseteq \FsetS$.
\end{definition}
\begin{theorem}
The poset of bi-leveled trees is a lattice.
\end{theorem}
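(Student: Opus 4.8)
The plan is to exhibit meets and joins explicitly, using the fact (assumed from the classical theory, and implicit in the discussion of Figure~\ref{F_f:bigthree}) that the Tamari order $\Fy_n$ is itself a lattice. A bi-leveled tree $(t;\FsetT)$ carries two pieces of data: the underlying binary tree $t\in\Fy_n$ and the order ideal $\FsetT$ of its node poset containing the leftmost node. Since the partial order on $\Fm_n$ is defined coordinatewise in a mixed way — $s\le t$ in $\Fy_n$ \emph{but} $\FsetT\subseteq\FsetS$ — the natural guess is that for $(s;\FsetS)$ and $(t;\FsetT)$ the join is built from the Tamari join $s\vee t$ together with the \emph{intersection} $\FsetS\cap\FsetT$, and dually the meet is built from the Tamari meet $s\wedge t$ together with the \emph{union} $\FsetS\cup\FsetT$.

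First I would make sense of these candidate formulas, since $\FsetS$ and $\FsetT$ are subsets of the node sets of \emph{different} trees and must be reinterpreted as subsets of the node set of $s\vee t$ (resp. $s\wedge t$). The right device is the numbering of nodes $1,\dotsc,n$ from left to right: after this identification every $\FsetT\subseteq\{1,\dotsc,n\}$, and one checks that under a Tamari cover the left-to-right labels behave compatibly enough that an order ideal in the node poset of $t$ pulls back/pushes forward to an order ideal in a comparable tree. So the first step is: show that if $s\le t$ in $\Fy_n$ and $\FsetT$ is an order ideal of the node poset of $t$ containing its leftmost node, then $\FsetT$ (as a subset of $[n]$) is also an order ideal of the node poset of $s$ containing \emph{its} leftmost node — i.e. the node poset of $s$ refines that of $t$ in the relevant sense. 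The leftmost-node condition is automatic because node $1$ is always on the leftmost branch. Second, I would verify that $\FsetS\cap\FsetT$ is an order ideal of the node poset of $s\vee t$ containing its leftmost node (it contains node $1$, and it is an ideal in each of the node posets of $s$ and $t$, hence in the coarser node poset of $s\vee t$), so that $(s\vee t;\,\FsetS\cap\FsetT)$ is a genuine bi-leveled tree; dually for the meet with $\FsetS\cup\FsetT$ and $s\wedge t$.

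Third, I would check the universal property. For the join: $(s\vee t;\FsetS\cap\FsetT)\ge (s;\FsetS)$ since $s\le s\vee t$ in $\Fy_n$ and $\FsetS\cap\FsetT\subseteq\FsetS$, and symmetrically it dominates $(t;\FsetT)$; and if $(u;\FsetP)$ dominates both, then $u\ge s$ and $u\ge t$ in $\Fy_n$ forces $u\ge s\vee t$, while $\FsetP\subseteq\FsetS$ and $\FsetP\subseteq\FsetT$ forces $\FsetP\subseteq\FsetS\cap\FsetT$, so $(u;\FsetP)\ge(s\vee t;\FsetS\cap\FsetT)$. The meet is dual. Since $\Fm_n$ is finite and has a top element (the right comb with empty ideal, i.e. only node $1$ circled) and a bottom element (the left comb with all nodes circled), existence of all binary meets and joins gives a lattice.

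The main obstacle I anticipate is the first step: carefully justifying that an order ideal of a binary tree's node poset remains an order ideal after a Tamari cover move (and hence after any Tamari comparison), i.e. that the left-to-right node labeling intertwines the various node posets correctly. A single rotation at an edge changes the ancestor relations only among the nodes touched by the rotation, and one must check that passing to the lower (more-refined) tree only \emph{adds} order relations among the left-to-right labels, so that ideals of the upper tree remain ideals below. This is a finite local check on the rotation move, but it is the crux; everything after it is the formal lattice bookkeeping sketched above.
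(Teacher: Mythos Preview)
Your overall architecture is the right one and matches the paper's, but the key claim you flag as ``the main obstacle'' is in fact \emph{false}, and this breaks your explicit formulas for meet and join.

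You conjecture that a Tamari rotation ``only adds order relations among the left-to-right labels,'' so that an upper order ideal of one tree's node poset remains an upper order ideal after passing to a Tamari-comparable tree. But a rotation does exactly the opposite at the pivot: it swaps which of the two involved nodes is the parent, hence \emph{reverses} their relation in the node poset rather than refining it. Concretely, in $\Fy_3$ take $s$ the balanced tree (node poset $1<2,\ 3<2$) with $\FsetS=\{1,2,3\}$ and $t=((\cdot(\cdot\cdot))\cdot)$ (node poset $2<1<3$) with its unique ideal $\FsetT=\{1,3\}$. Their Tamari join is the right comb (node poset $3<2<1$), but $\FsetS\cap\FsetT=\{1,3\}$ is \emph{not} an upper order ideal there, since $2>3$ is missing. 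Dually, with $s$ balanced and $\FsetS=\{1,2\}$, $t=(\cdot((\cdot\cdot)\cdot))$ and $\FsetT=\{1\}$, the Tamari meet is the left comb (node poset $1<2<3$) and $\FsetS\cup\FsetT=\{1,2\}$ fails the ideal condition since $3>2$ is missing. So your candidates $(s\vee t;\,\FsetS\cap\FsetT)$ and $(s\wedge t;\,\FsetS\cup\FsetT)$ are not even bi-leveled trees in general.

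The paper's proof takes the same two-coordinate approach but inserts the correction you are missing: for the join one takes the \emph{largest} upper order ideal of the node poset of $\sup\{s,t\}$ contained in $\FsetS\cap\FsetT$, and for the meet one takes $\FsetS\cup\FsetT$ and then closes upward in the node poset of $\inf\{s,t\}$, adjoining all $x$ with $x>y$ for some $y\in\FsetS\cup\FsetT$. With these corrected formulas your verification of the universal property goes through essentially as you wrote it; the point is that any common upper bound $(u;\FsetP)$ already has $\FsetP\subseteq\FsetS\cap\FsetT$, and one then checks it sits above the corrected join.
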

\begin{proof} The unique supremum of two bi-leveled trees $(t;\FsetT) $ and $(s;\FsetS) $ is found by first
taking their unique supremum $\sup\{t,s\}$ in the Tamari lattice,
and then circling as many nodes of $\sup\{t,s\}$ in the intersection
$\FsetT \cap \FsetS$ as are allowed by the upper order ideal
condition. That is, the circled nodes of the join comprise the
largest order ideal of nodes of $\sup\{t,s\}$ that is contained in
$\FsetT \cap \FsetS.$ The unique infimum is found  by taking the
infimum of the two trees in the Tamari lattice, the union of the two
order ideals, and adding to the latter any nodes necessary to make
that union an order ideal in the node poset of $\inf\{s,t\}$. That
is, the meet is given by $(\inf\{t,s\};\FsetT \cup\FsetS \cup\{x~|~x
>y \in \FsetT \cup\FsetS\}).$
\end{proof}

The Hasse diagrams of the posets $\Fm_n$ are $1$-skeleta for the
multiplihedra. The Hasse diagram of $\Fm_4$ appears in
Figure~\ref{F_fig: M4}. Stasheff used a different type of tree for
the vertices of $\Fm_\Fbb$. A \emph{painted binary tree} is a planar
binary tree $t$, together with a (possibly empty) upper order ideal
of the node poset of $t$. (Recall the root node is maximal.) We
indicate this ideal  by painting part of a representation of $t$.
For clarity, we stop our painting in the middle of edges (not
precisely at nodes). Here are a few simple examples,
\[
    \includegraphics{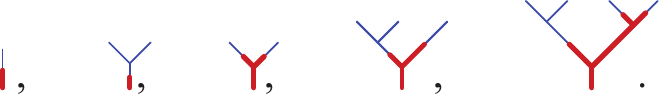}
\]

An \emph{$A_n$-space} is a topological $H$-space with a weakly
associative multiplication of points \cite{F_Sta:1963}. Maps between
$A_{\infty}$ spaces are only required to preserve the $A_{\infty}$
structure up to homotopy.
 Stasheff~\cite{F_Sta:1970}
described these maps combinatorially using cell complexes called
multiplihedra, while Boardman and Vogt~\cite{F_BoaVog:1973}  used
spaces of painted trees. Both the spaces of trees and the cell
complexes are homeomorphic to convex polytope realizations of the
multiplihedra as shown in \cite{F_forcey1}.

If $f\colon ( X,\FPlaceholder) \to (Y,\ast)$ is an $A_{\infty}$-map
homotopy $H$-spaces, then the different ways to multiply and map $n$
points of $X$ are naturally represented by a painted tree, as
follows. Nodes not painted correspond to multiplications in $X$,
painted nodes correspond to multiplications in $Y$, and the
beginning of the painting (along the edges) indicates the moment $f$
is applied to a given point in $X$. (Weak associativity of $X$ and
$Y$ justifies the use of planar binary trees, which represent the
distinct associations on a set of inputs.) See Figure \ref{F_fig:
maps to painted}.
%%%%%%%%%%%%%%%%%%%%%%%%%%%%%%%%%%%%%
\begin{figure}[hbt!]
\[
    f({a})\,{\bm\ast}\,\bigl(f({b\,\FPlaceholder\,c})\,
     {\bm\ast}\,f({d})\bigr)
    \ \longleftrightarrow \
    {\raisebox{-.5\height}{\includegraphics{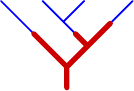}}}
\]
\caption{$A_n$-maps between $H$-spaces $({ X},{ \bullet})
\stackrel{f}{\longrightarrow} ({ Y},{ \bm\ast})$ are painted trees.}
\label{F_fig: maps to painted}
\end{figure}

Figure~\ref{F_fig: M4} shows two versions of the three-dimensional
multiplihedron as Hasse diagrams.
%\index{lattice!multiplihedron}\index{multiplihedron}

Bi-leveled trees having $n{+}1$ internal nodes are in bijection with
painted trees having $n$ internal nodes, the bijection being given
by pruning: Remove the leftmost branch (and hence, node) from a
bi-leveled tree to get a tree whose order ideal is the order ideal
of the bi-leveled tree, minus the leftmost node. We refer to this as
$bij_2$. This mapping and its inverse are illustrated in
Figure~\ref{F_fig: painted to bi-leveled}. The composition of
$bij_2$ with $\beta$ is just called $\beta.$  (We will often use
these bijections as identities.)
%%%%%%%%%%%%%%%%%%%%%%%%%%%%%%%%%%%%%%%%%%%%%%%%%%%%%%%%%%%%%%%%%%%%%%%%%%
\begin{figure}[htb]
\[
  \includegraphics{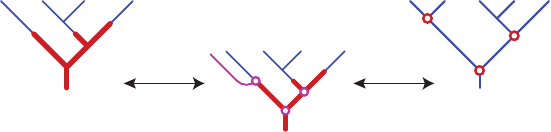}
\]
\caption{Painted trees correspond to bi-leveled trees.
}\label{F_fig: painted to bi-leveled}
\end{figure}
\begin{figure}[hbt!]
\[
    {\includegraphics[width = \textwidth]{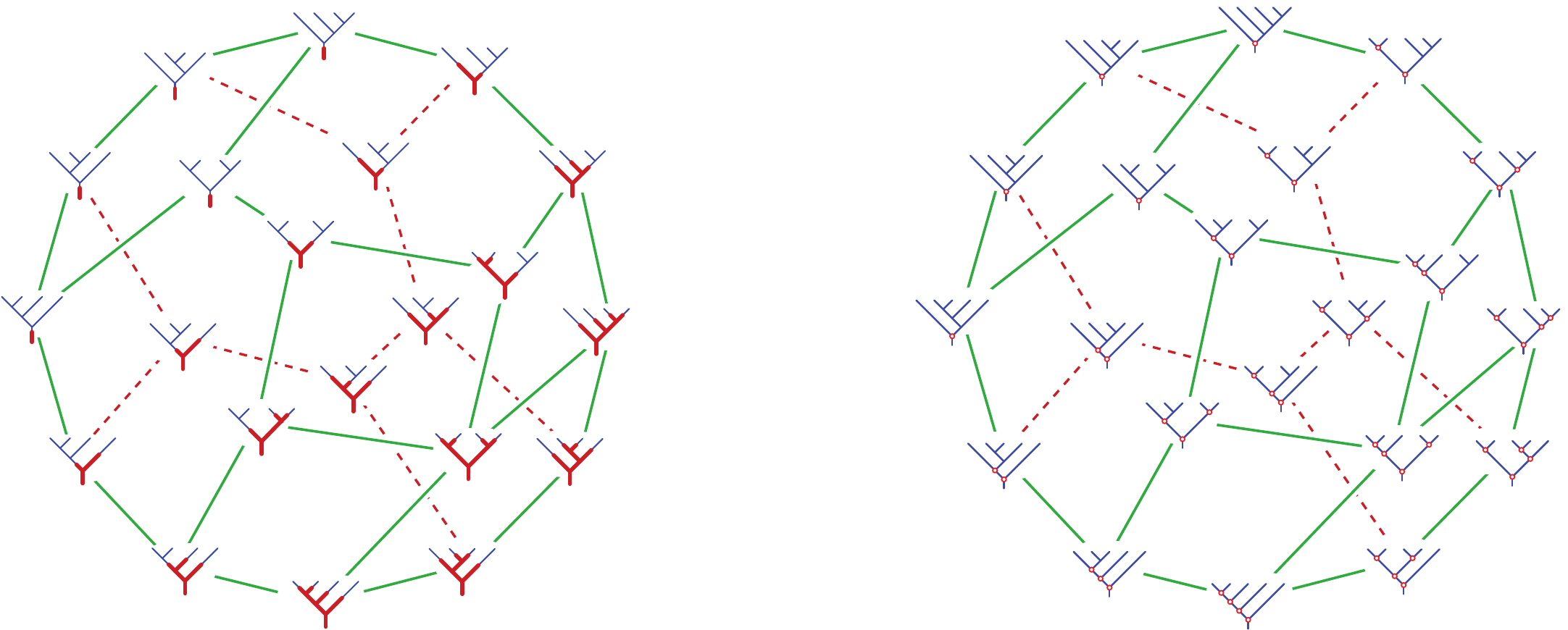}}
\]
\caption{Two Hasse diagrams of the multiplihedra lattice $\Fm_4,$
labeled with painted and bi-leveled trees. } \label{F_fig: M4}
\end{figure}
%\begin{svgraybox}
\begin{remark}
If the leaves of a painted binary tree are labeled by the elements
of a set, it is recognizable as a structure in a certain
combinatorial species: the self-composition $\Fy \circ \Fy$ of
binary trees. The structure types of this species are the
(unlabeled) binary painted trees themselves. Forgetting the painting
in any painted tree is precisely the composition in the operad of
binary trees.
\end{remark}
%\end{svgraybox}

Forgetting the levels in a bi-leveled tree (removing the circles)
gives a different (from the one just remarked on) projection to
binary trees. We denote this by $\phi: \Fm_n\to\Fy_n$ as in
\cite{F_FLS:2010}. Now the composition $\phi\circ\beta$ gives the
Tonks projection from $\Fs_n$ to $ Fy_n,$
 denoted respectively by
$\Theta$ in \cite{F_Ton:1997}, by $\tau$ in \cite{F_FLS:2010} and by
$\Psi$ in \cite{F_LodRon:1998}.

In \cite{F_LodRon:1998} Loday and Ronco define a poset map from $\Fy_n$ to
$\Fq_n.$ (They call it $\phi,$ here we denote it $\hat{\phi}$ to avoid
duplicate naming.) This map takes a tree and gives a vertex of the hypercube
$[-1,1]^n$ by assigning either $+1$ or $-1$ to each of the branches not on a
limb. Each branch is assigned its slope, where the tree is drawn with 45 degree
angles. Further, they use  a bijection (we call it $bij_4$) from these vertices
to elements of the boolean lattice $\Fq_n$ defined by including the elements
$i\in[n-1]$ which correspond to the coordinates $x_i = -1.$ The composition of
$\hat{\phi} , \phi$  and $\beta$ gives the descents of the permutation.
\begin{figure}[hbt!]
\[
    {\includegraphics[width = \textwidth]{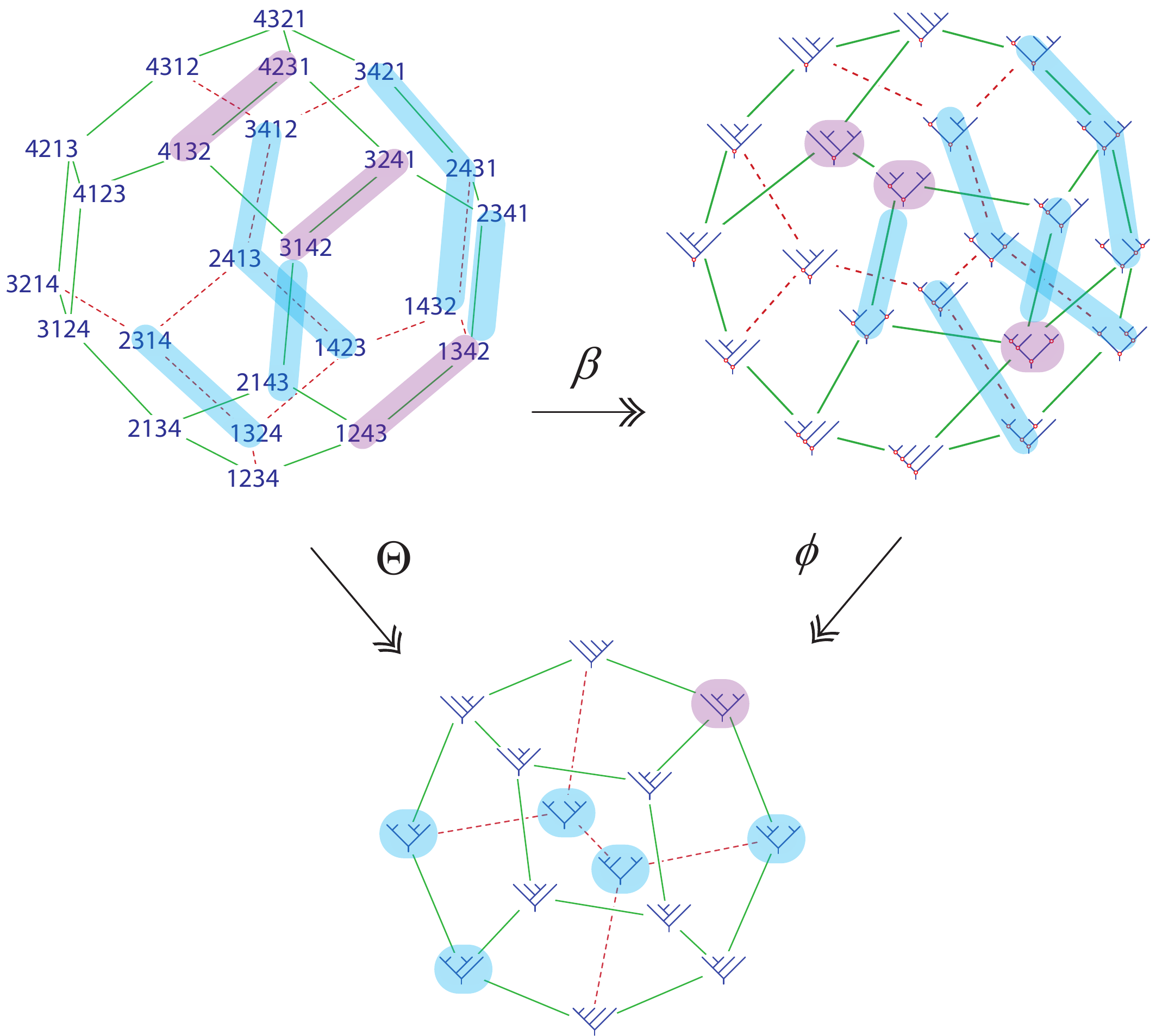}}
\]
\caption{The maps $\Theta =  \phi \circ \beta  $ shown with
retracted intervals shaded.} \label{F_fig:betaphi}
\end{figure}
\subsection{Trees with corrollas}

We will use the term corolla to describe a rooted tree with one
interior node and $n+1$ leaves. In a forest of corollas attached to
a binary tree, each corolla may be replaced by a positive
\emph{weight} counting the number of leaves in the corolla.
(Alternately, as in \cite{F_FLS3}, the corollas may be replaced by
combs.) These all give \emph{weighted trees}.
 \begin{equation}\label{F_Eq:composite_trees}
     \includegraphics{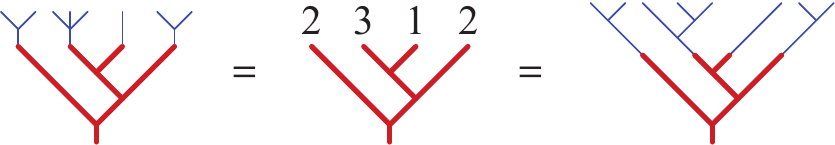}
 \end{equation}
%
%\begin{svgraybox}
\begin{remark}
By labeling leaves of a comb by the elements of a set, we define a species
called $\FC,$ which is in fact isomorphic to the species of lists. Labeled
weighted trees (as combs grafted to a tree) are recognizable as the  structures
in the species composition $\Fy \circ \FC$.
\end{remark}
%\end{svgraybox}

Let $\Fck_n$ denote the weighted trees with weights summing to
$n{+}2$. These index the vertices of the $n$-dimensional
\emph{composihedron}, $\Fck(n{+}1)$~\cite{F_forcey2}. This sequence
of polytopes parameterizes homotopy maps from strictly associative
$H$-spaces to $A_{\infty}$-spaces. %\index{homotopy}

If we use right combs instead of corollas as the weights on our
weighted trees, then the same relations as in $\Fm_n$ give the
weighted trees a lattice structure. The joins and meets are found as
for painted trees, with the final step of combing the unpainted
subtrees. Figure~\ref{F_F:comp} gives two pictures of the
composihedron $\Fck_4$.
%\index{lattice!composihedron}\index{composihedron}
%%%%%%%%%%%%%%%%%%%%%%%%%%%%%%%%%%%%%%%%%%%%%%%%%%%%%%%%%
\begin{figure}[hbt]
\[
  {\includegraphics[width = \textwidth]{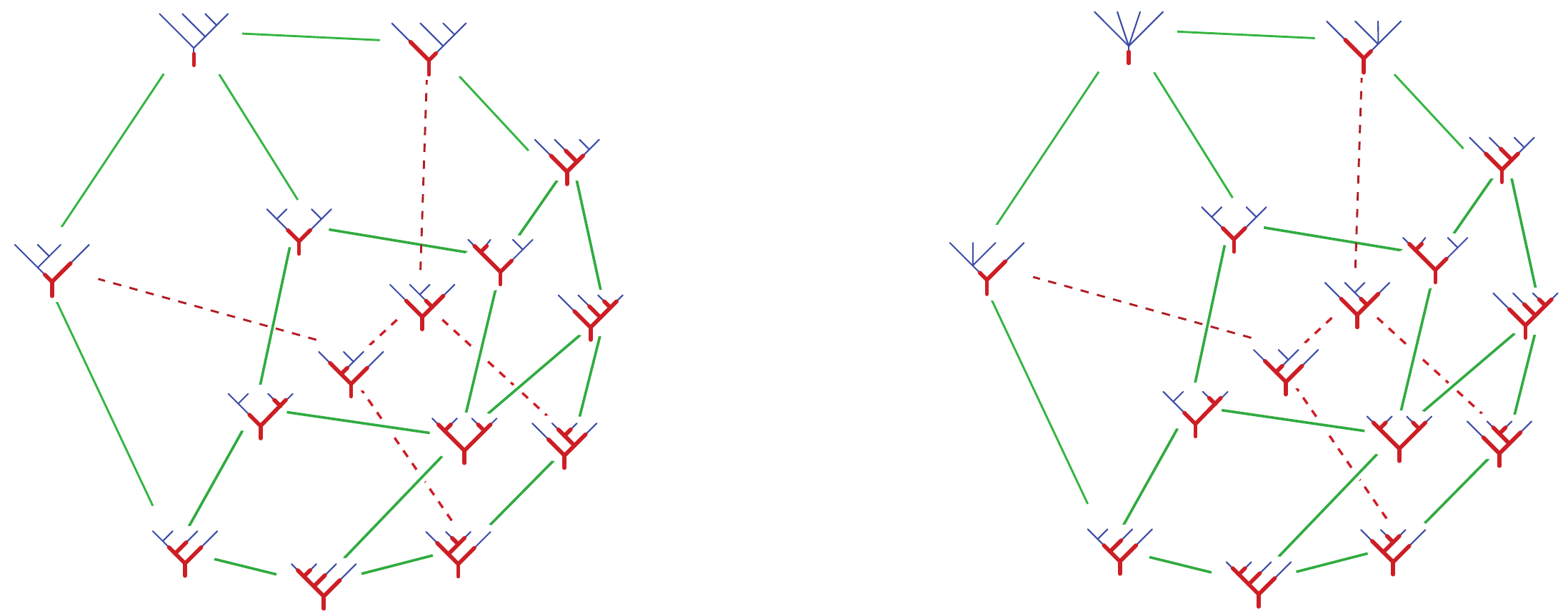}}
\]
\caption{The one-skeleton of the three-dimensional composihedron, as
a Hasse diagram labeled by two representations of weighted trees.}
\label{F_F:comp}
\end{figure}
%%%%%%%%%%%%%%%%%%%%%%%%%%%%%%%%%%%%%%%%%%%%%%%%%%%%%%%%%
The 2 and 3-dimensional composihedra $\Fck(n)$ also appear as the
commuting diagrams in enriched bicategories~\cite{F_forcey2}. As a
special case of enriched bicategories, these diagrams appear in the
definition of pseudomonoids~\cite[Appendix~C]{F_AguMah:2010}.

On the other hand, attaching a forest of binary trees to a single
corolla is really just a way of picturing an ordered forest of
binary trees, listed left to right. There is a well known bijection
from the set of ordered forests with $n+1$ total leaves to $\Fy_n.$
We call this bijection $bij_3$. It is described by taking the $k$
trees of the forest in left-to-right order and attaching them to a
single limb, which will be the new left limb. Thus we can recognize
this set as another version of $\Fy_n.$

Finally we consider trees with $n$ interior nodes obtained by
grafting a forest of combs to the leaves of a comb (which is
painted). Analogous to~\ref{F_Eq:composite_trees}, these are
weighted combs (or corollas). As these are in bijection with
number-theoretic compositions of $n{+}1$, we refer to them as
\emph{composition
  trees}.
\[
   \includegraphics{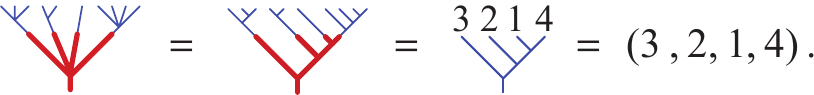}
\]

%\begin{svgraybox}
\begin{remark}
Leaf-labeled composition trees (where we are labeling the leaves of the forest
of combs grafted to a comb) are recognizable as the structures in the species
composition $\FC \circ \FC$.
\end{remark}
%\end{svgraybox}

 In the next section we will describe maps from
the multiplihedra to the hypercubes, but first we note that we will
use a different bijection from the set of composition trees to
$\Fq_n.$ A composition tree is associated by bijection $bij_5$ with
the set of vertices that are unpainted.

\section{Interval retracts}%\index{lattice!interval retract}

In \cite{F_FLS:2010} it is shown that there exists a section of the
projection $\beta:\Fs_n \to \Fm_n$ which demonstrates $\beta$ to be
an interval retract. We review that definition here. Recall that an
interval $[a,b]$ of a poset $\FsetP$ is a sub-poset given by $\{x
~|~ a\le x \le b \in \FsetP.\}$

 A surjective poset map $f\colon
P\to Q$ from a finite lattice $P$ is an \emph{interval retract} if
the fibers of $f$ are intervals and if $f$ admits an
order-preserving section $g\colon Q\to P$ with $f\circ g=\mbox{id}$.
Also in \cite{F_FLS:2010} there is proven a useful relation between
the M\"obius functions of $\Fs_\Fbb$ and $\Fm_\Fbb,$ which is in
fact established there in a general form.
%%%%%%%%%%%%%%%%%%%%%%%%%%%%%%%%%%%%%%%%%%%%%%%%%%%%%%%%%%%%%%%%%%%%%%%%%%%%%
\begin{theorem}\label{F_thm: subGalois} \cite{F_FLS:2010}
Let the poset map $f\colon P\to Q$ be an interval retract, then the
M\"obius functions $\mu_P$ and $\mu_Q$ of $P$ and $Q$ are related by
the formula
\begin{equation}\label{F_eq: subGalois}
    \mu_Q(x,y) \ =\,
    \sum_{\substack{f(a)=x\\f(b)=y}} \mu_P(a,b) \qquad (\forall x,y\in Q).
\end{equation}
\end{theorem}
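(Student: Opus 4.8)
The plan is to establish the M\"obius relation by exploiting the order-preserving section $g\colon Q\to P$ together with the interval structure of the fibers. First I would fix $x\le y$ in $Q$ and observe that the set $\{a\in P \mid x\le f(a)\le y\}$ is precisely the union of the fibers $f^{-1}(z)$ over $z\in[x,y]$; since each fiber is an interval and $f$ is a poset map with a section, this preimage is itself the interval $[g(x), \widehat{g(y)}]$ in $P$, where $\widehat{g(y)}$ denotes the top of the fiber $f^{-1}(y)$. (One checks that $g(x)$ is the bottom element of the whole preimage: any $a$ with $f(a)\ge x$ satisfies $a\ge g(f(a))\ge g(x)$ because $g$ is order-preserving and $g\circ f\le \mathrm{id}$ on fibers would need the fiber-interval fact; I would spell this out carefully.)

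Next I would run the standard double-counting argument for M\"obius functions over this interval. Summing the defining recurrence $\sum_{x\le z\le w}\mu_Q(x,z)=\delta_{x,w}$ is one route, but the cleaner approach is to use the characterization via the zeta function in the incidence algebra: I would show that the right-hand side of \eqref{F_eq: subGalois}, call it $\nu(x,y):=\sum_{f(a)=x,\,f(b)=y}\mu_P(a,b)$, satisfies the same recurrence as $\mu_Q$. Concretely, for $x\le y$ in $Q$,
\[
    \sum_{x\le z\le y}\nu(x,z)
    \;=\;\sum_{x\le z\le y}\ \sum_{\substack{f(a)=x\\ f(b)=z}}\mu_P(a,b)
    \;=\;\sum_{\substack{f(a)=x\\ a\le b\le \widehat{g(y)}}}\mu_P(a,b),
\]
where in the last step I reindex the double sum: as $z$ ranges over $[x,y]$ and $b$ over $f^{-1}(z)$, the pair $(z,b)$ ranges exactly over $b\in[a,\widehat{g(y)}]$ with $f(b)\in[x,y]$, which by the fiber-interval observation is all of $[a,\widehat{g(y)}]$. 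Then the inner sum $\sum_{a\le b\le \widehat{g(y)}}\mu_P(a,b)$ is $\delta_{a,\widehat{g(y)}}$, which is nonzero only when $a=\widehat{g(y)}$, forcing $x=f(a)=y$; hence $\sum_{x\le z\le y}\nu(x,z)=\delta_{x,y}$. By uniqueness of the M\"obius function as the inverse of $\zeta$ in the incidence algebra of $Q$, this gives $\nu=\mu_Q$.

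The main obstacle I anticipate is the bookkeeping in the reindexing step, i.e.\ verifying cleanly that $\{\,b\in P \mid f(b)\in[x,y]\,\}=[g(x),\widehat{g(y)}]$ and that restricting $f(a)=x$ inside this does not disturb the interval $[a,\widehat{g(y)}]$ when we let $b$ vary. This is where the two hypotheses on an interval retract are both genuinely used: the section $g$ supplies the bottom element of the preimage and the monotonicity needed for one containment, while the hypothesis that fibers are intervals supplies the top element $\widehat{g(y)}$ and the reverse containment. A secondary point to handle with care is that $P$ is assumed to be a finite lattice so that all these intervals are finite and the incidence-algebra argument is literally valid; I would note that $Q$ inherits finiteness from surjectivity of $f$, so $\mu_Q$ is well defined. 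No deeper input (e.g.\ shellability or explicit lattice formulas) is needed — the argument is purely order-theoretic.
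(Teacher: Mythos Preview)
Your overall strategy---define $\nu(x,y)=\sum_{f(a)=x,\,f(b)=y}\mu_P(a,b)$ and verify that $\nu$ satisfies the defining recursion of $\mu_Q$---is the right shape, and you correctly locate the crux in the reindexing step. But the specific interval claim you rely on is false. It is \emph{not} true in general that $f^{-1}([x,y])=[g(x),\widehat{g(y)}]$, nor that for $a\in f^{-1}(x)$ the set $\{b:a\le b,\ f(b)\le y\}$ equals $[a,\top_y]$, nor even that this set is an interval at all. A small counterexample: take $P$ to be the six-element lattice with covers $\hat0\lessdot a,b$; $a\lessdot c,d$; $b\lessdot d$; $c,d\lessdot\hat1$, take $Q=\{0<1<2\}$, and define $f$ by $f^{-1}(0)=[\hat0,d]=\{\hat0,a,b,d\}$, $f^{-1}(1)=\{c\}$, $f^{-1}(2)=\{\hat1\}$; the section $g(0)=a$, $g(1)=c$, $g(2)=\hat1$ is order-preserving. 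Then with $x=0$, $y=1$ one has $\top_y=c$, yet $d\in f^{-1}(0)$ is incomparable to $c$, so $[d,\top_y]=\varnothing$ while $\{q\ge d:f(q)\le 1\}=\{d\}$. Worse, $\{q\ge\hat0:f(q)\le 1\}=\{\hat0,a,b,c,d\}$ has the two incomparable maxima $c,d$ and is not an interval; the inner sums $\sum_{q}\mu_P(p,q)$ for fixed $p$ are \emph{not} individually Kronecker deltas (for $p=a$ one gets $-1$, for $p=d$ one gets $+1$), even though their total over $p\in f^{-1}(0)$ is $0$ as required. Your side remark that ``$g\circ f\le\mathrm{id}$'' would force $g$ to hit the bottom of every fiber, which is not assumed.

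The paper does not give a proof but points to \cite{F_FLS:2010}, noting that the argument there ``relies on the fact that the intersection of two intervals in a lattice is again an interval.'' That is precisely the missing ingredient: rather than trying to realize $\{b:a\le b,\ f(b)\le y\}$ as an interval (it need not be one), one works with intersections of the form $f^{-1}(z)\cap[a,b]=[\bot_z,\top_z]\cap[a,b]$, which \emph{are} intervals because $P$ is a lattice. The bookkeeping has to be organized around this fact, not around the hoped-for principal-ideal structure of $f^{-1}([x,y])$. So the obstacle you anticipated is real, but your proposed resolution does not survive; you will need to restructure the double sum so that the lattice property (intersections of intervals) does the work.
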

The proof in \cite{F_FLS:2010} relies on the fact that the
intersection of two intervals in a lattice is again an interval.

Here we define eight closely related maps in order to demonstrate
four new interval retracts: first four projections, each associated
to a corresponding section.

\[
\xymatrix%@R=3.25pt@C=2.25pt
 {
&\Fm_n \ar@{->>}[dl]^{\gamma}\ar@{->>}[dr]_{\varphi}\\
\Fy_n\ar@{->>}[dr]_{\hat{\varphi}}&&\Fck_n\ar@{->>}[dl]^{\hat{\gamma}}\\
&\Fq_n } \quad\quad
\xymatrix%@R=3.25pt@C=2.25pt
{
&\Fm_n \ar@{<-_{)}}[dl]^{\gamma_<}\ar@{<-^{)}}[dr]_{\varphi_>}\\
\Fy_n\ar@{<-^{)}}[dr]_{\hat{\varphi}_>}&&\Fck_n\ar@{<-_{)}}[dl]^{\hat{\gamma}_<}\\
&\Fq_n }
\]

The maps $\gamma$ and $\hat{\gamma}$ operate by replacing the
painted portion with a corolla, while $\varphi$ and $\hat{\varphi}$
replace the unpainted forest with a forest of corollas.

We define the sections $\gamma_<$ and $\hat{\gamma}_<$ by replacing painted
corollas with left combs, while $\varphi_>$ and $\hat{\varphi}_>$ are defined
by replacing unpainted corollas with right combs

The main result will be that each paired projection and section between the
same two lattices together define an interval retract. In
figure~\ref{F_f:mapmap} we demonstrate all the projections and bijections
described above.
\begin{figure}[hbt]
\[\includegraphics[width = \textwidth]{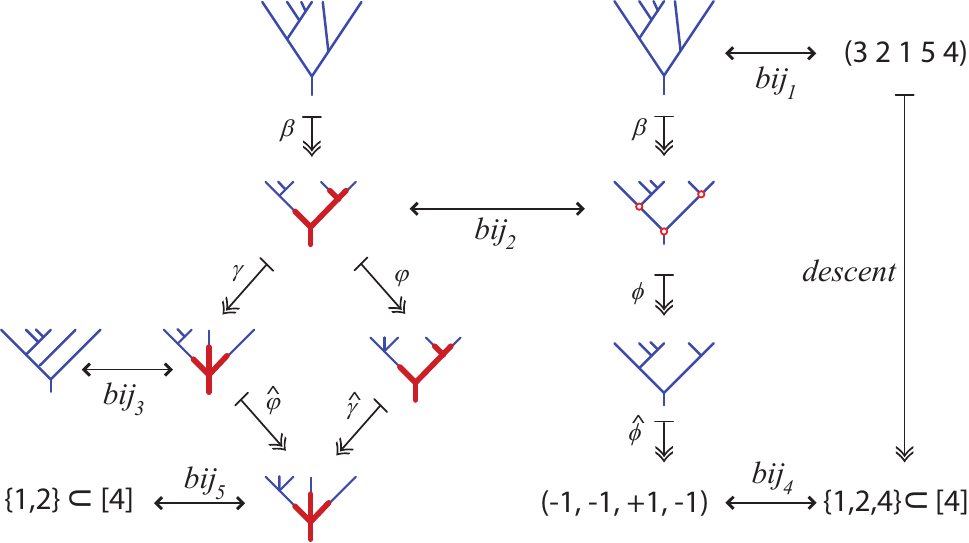}
\]
\caption{Projections and bijections in this paper. }
\label{F_f:mapmap}
\end{figure}
\begin{figure}[hbt]
\[\includegraphics[width = \textwidth]{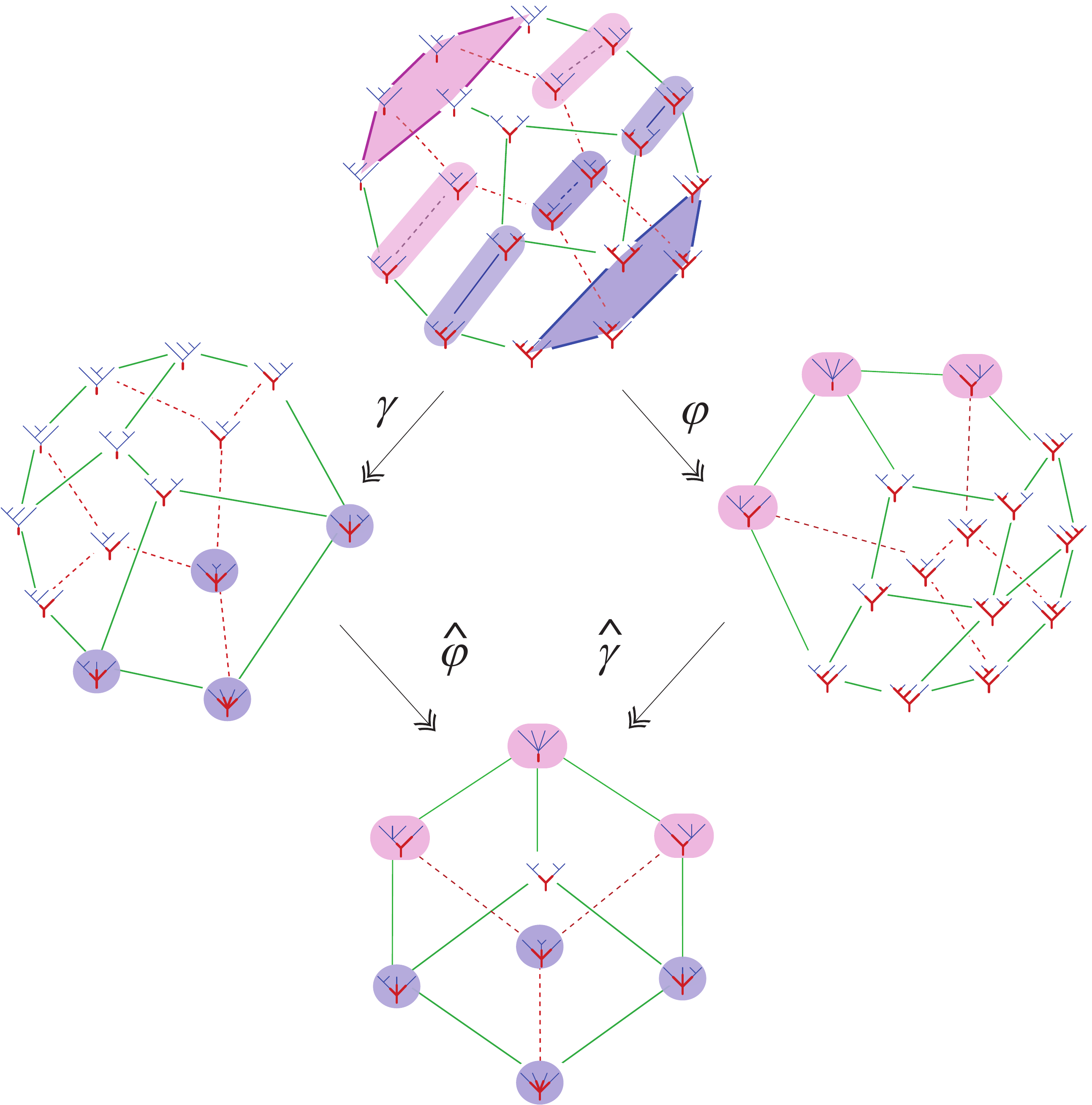}
\]
\caption{The four projections in dimension 3, with shaded intervals
retracted.} \label{F_f:multifiber}
\end{figure}
\begin{figure}[hbt]
\[\includegraphics[width = \textwidth]{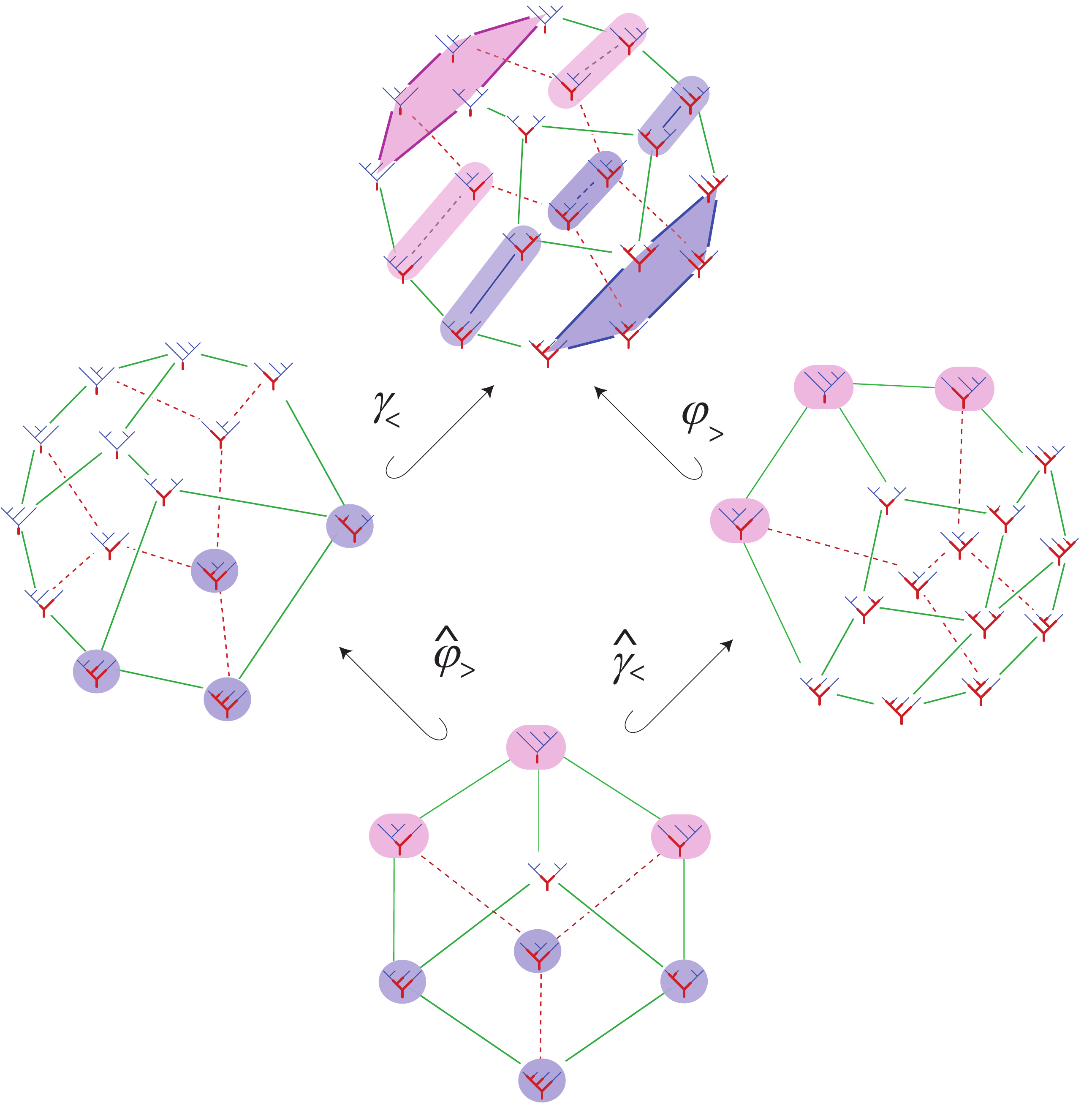}
\]
\caption{The four sections in dimension 3. Here we label the
elements of $\Fy_n,$ $\Fck_n$ and $\Fq_n$ using left and right combs
so that the sections can be seen as inclusions.}
\label{F_f:multisection}
\end{figure}

\begin{theorem}
The map $\gamma$  is an interval retract from $\Fm_n$ to $\Fy_n.$
\end{theorem}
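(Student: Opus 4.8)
The plan is to check directly the three requirements in the definition of interval retract: that $\gamma\colon \Fm_n\to\Fy_n$ is a surjective poset map, that each fibre $\gamma^{-1}(t)$ is an interval of $\Fm_n$, and that $\gamma$ has an order-preserving section. The natural candidate for the section is $\gamma_<$, which sends $t\in\Fy_n$, regarded through $bij_3$ as an ordered forest of binary trees grafted to a corolla, to the painted tree obtained by replacing that corolla with a painted left comb on the same leaves. I would first record the identity $\gamma\circ\gamma_<=\mathrm{id}_{\Fy_n}$: applying $\gamma$ collapses that painted left comb back to a corolla and re-reads the grafted forest via $bij_3$, which returns $t$. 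In particular $\gamma$ is onto.

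Next I would prove that $\gamma$ and $\gamma_<$ are order-preserving by an analysis of cover relations. One finds that a cover in $\Fm_n$ is of one of three kinds: a Tamari rotation inside one of the unpainted subtrees; a Tamari rotation inside the painted binary top; or a \emph{depainting} cover, in which a minimal painted node leaves the order ideal and the subtree it heads re-enters the unpainted forest. Under $\gamma$: a rotation inside a component of the forest underlying $\gamma(\tau;\FsetT)$ is, via $bij_3$, a Tamari cover of the grafted tree, so the first kind maps to a cover of $\Fy_n$; for the second kind the two trees collapse to the same corolla and carry the same forest, so it maps to an equality; for the third kind, deleting the painted node merges two adjacent components of the forest, which via $bij_3$ is again an order-increasing Tamari move. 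Since we only need $\gamma$ of a cover to satisfy $\le$, it is harmless if such a move spreads out into a non-saturated chain. Monotonicity of $\gamma_<$ is the analogous but lighter computation: a Tamari cover $t\lessdot t'$ becomes a change of ordered forest under $bij_3^{-1}$, and capping each side with a left-comb painted top of the right size respects the defining inequality of $\Fm_n$ (tree up in the Tamari order, ideal down under inclusion).

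Finally, for the fibres, fix $t\in\Fy_n$. A painted tree lies in $\gamma^{-1}(t)$ exactly when its unpainted forest equals $bij_3^{-1}(t)$, so $\gamma^{-1}(t)$ is the set of painted trees with this prescribed unpainted forest and with painted binary top an arbitrary binary tree on the corresponding number of leaves. The only cover relations of $\Fm_n$ internal to this set are rotations of the painted top, so $(\gamma^{-1}(t),\le_{\Fm_n})$ is isomorphic to a Tamari lattice; it therefore has a least element $\gamma_<(t)$ (left-comb top) and a greatest element $M(t)$ (right-comb top). Since $\gamma$ is a poset map the fibre is convex in $\Fm_n$ — if $a\le c\le b$ with $\gamma(a)=\gamma(b)=t$ then $\gamma(c)=t$ — and a convex subposet of a lattice that has a least and a greatest element is the interval between them; hence $\gamma^{-1}(t)=[\gamma_<(t),M(t)]$. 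Combining the three steps, $\gamma$ is an interval retract, so Theorem~\ref{F_thm: subGalois} applies to $\Fm_n$ and $\Fy_n$.

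The step I expect to be the main obstacle is the monotonicity of $\gamma$ on the depainting covers, since that is where the effect of removing a node from an order ideal must be matched against the combinatorics of $bij_3$ to be certain the $\Fy_n$-image never strictly decreases. A subsidiary point that also needs care is confirming that the order induced on a fibre is a (lower interval of the) Tamari order rather than its opposite, and pinning down the top element $M(t)$ explicitly.
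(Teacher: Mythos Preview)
Your proposal is correct and follows the same overall plan as the paper: verify $\gamma\circ\gamma_<=\mathrm{id}$, identify each fibre as the set of painted trees with a fixed unpainted forest and arbitrary painted top (hence isomorphic to a smaller Tamari lattice with explicit least and greatest elements), and show that both $\gamma$ and $\gamma_<$ are order-preserving. The one methodological difference worth noting is in the monotonicity arguments. Where you analyse the three types of cover relation in $\Fm_n$ and track each under $\gamma$, the paper instead describes $\gamma(a)$ procedurally---perform all downward Tamari moves inside the painted region of $a$, attach a new leftmost branch, then forget the painting---and observes that when $a<b$ the extra painted nodes of $a$ simply contribute additional downward moves, giving $\gamma(a)\le\gamma(b)$ without a case split. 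Likewise for $\gamma_<$ the paper uses the pruning description (remove the leftmost leaf and paint along the new leftmost branch) and checks that a Tamari chain from $q$ to $s$ survives while the painted set can only shrink. Your cover-by-cover approach is more systematic and makes the depainting case explicit; the paper's procedural view sidesteps that case distinction at the cost of being less detailed. Your convexity argument for the fibres (derived from monotonicity of $\gamma$) is a clean point the paper leaves implicit.
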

\begin{proof}
We use the section $\gamma_<$. Thus we need to show four things:
that $\gamma^{-1}(t)$  is an interval in $\Fm_n$ for any $t\in
\Fy_n$; that $\gamma$ and $\gamma_<$ both preserve order, and that
$\gamma \circ \gamma_<$ is the identity map. This last fact is
straightforward, since it constitutes first removing and then
replacing an unpainted forest on its painted comb.

Consider $t\in \Fy_n$  with the $k$-forest $f$ of subtrees with
initial nodes on the left limb of $t.$  To show that inverse images
of $\gamma$ are intervals, we point out that $\gamma^{-1}(t)$ is the
set of painted trees  with unpainted forest $f$ and any painted
portion. This is an interval since its elements comprise all those
between a unique min and max given by minimizing and maximizing the
painted portion. In fact, the interval is isomorphic to a copy of
the Tamari lattice $\Fy_{k-1}.$

 Next to show that $\gamma$ preserves the order, we let
$a<b \in \Fm_n.$ This means that $t_a\le t_b$ in the Tamari order,
and that $P_a \supseteq P_b.$  Now we may visualize the action of
$\gamma$ as a series of smaller steps: first we make all possible
Tamari moves in the painted region of  $a$ that each yield
sequentially lesser trees. Then we attach a new branch to the
left-most painted  point of $a,$  and finally forget the painting
altogether. The same basic steps are performed to find $\gamma(b).$
Since $P_b \subseteq P_a$ we can see the relation $\gamma(b)\ge
\gamma(a)$ by the series of Tamari moves to get from $b$ to $a$
followed by more moves resulting from the possibly additional
painted nodes of $a$.

To show that if $q<s\in \Fy_n$ implies that $\gamma_<(q) \le
\gamma_<(s)$ , we consider  the string of Tamari covering moves that
relate $q$ to $s.$ Recall that $\gamma_<$ takes the $k$-forest $f$
of sub-trees attached along the left limb of $q$ and instead
attaches them to a minimal painted $k$-tree, that is, they are
grafted to a painted left comb with $k$ branches. Alternately this
is described by simply pruning away the leftmost leaf of $q$ and
painting the nodes of $q$ along the leftmost branch. We see that
$t_{\gamma_<(q)} \le t_{\gamma_<(s)}$ by noting that the moves
between them are the same as those from $q$ to $s.$ Then we note
that we have $P_{\gamma_<(q)} \supseteq P_{\gamma_<(s)}$ since any
move from $q$ to $s$ either subtracts from the set of painted nodes
in the eventual image (if the move involves a node on the leftmost
branch of $q$) or leaves that set unchanged.
\end{proof}

\begin{theorem}
The map $\varphi$  is an interval retract from $\Fm_n$ to $\Fck_n.$
\end{theorem}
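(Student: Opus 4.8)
The plan is to reuse the four-step scheme of the proof for $\gamma$, now pairing the projection $\varphi$ with the section $\varphi_>$. We must check: (i) $\varphi\circ\varphi_>=\mathrm{id}_{\Fck_n}$, which also yields surjectivity of $\varphi$; (ii) $\varphi^{-1}(w)$ is an interval in $\Fm_n$ for every $w\in\Fck_n$; (iii) $\varphi$ preserves order; and (iv) $\varphi_>$ preserves order. Step (i) is immediate: $\varphi_>$ turns each unpainted weighted corolla of $w$ into a right comb on the same number of leaves, and $\varphi$ collapses each unpainted subtree back to a corolla of that weight, the painted portion being untouched throughout. Step (iv) is almost as direct: the order on $\Fck_n$ is, by definition, the order induced from $\Fm_n$ on those painted trees whose maximal unpainted subtrees are right combs, and $\varphi_>$ sends a weighted tree to precisely that right-comb representative, so $\varphi_>$ is an order embedding; alternatively, it mirrors the argument given for $\gamma_<$.

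For (ii), fix $w\in\Fck_n$, let $p$ be its painted tree, and let $w_1,\dots,w_k$ be the weights of the unpainted corollas grafted to the $k$ leaves of $p$. Then $\varphi^{-1}(w)$ is exactly the set of painted trees whose painted portion is $p$ and whose unpainted forest is a tuple $(u_1,\dots,u_k)$ with each $u_i$ an arbitrary binary tree on $w_i$ leaves. The blocks $u_i$ vary independently and the $\Fm_n$-order restricted to $\varphi^{-1}(w)$ is the product of the Tamari orders on the blocks, so $\varphi^{-1}(w)$ is the interval $[a,b]$ of $\Fm_n$, where $a$ and $b$ are obtained from $p$ by taking every $u_i$ to be a left comb, respectively a right comb; it is thus isomorphic to $\prod_{i=1}^{k}\Fy_{w_i-1}$. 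As in the $\gamma$ case, one must note that any painted tree lying between $a$ and $b$ in $\Fm_n$ automatically has painted portion $p$ (since $P_a=P_b$ pins down its painted ideal) and the prescribed leaf-composition for its forest (since every Tamari move occurring strictly inside the interval is internal to a single unpainted subtree and cannot alter how the forest attaches to $p$); so the fiber is genuinely convex, not merely a sub-poset with a least and a greatest element.

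For (iii) I would argue as for $\gamma$: present a relation $a<b$ in $\Fm_n$ as a string of Tamari covering moves carrying $t_a$ up to $t_b$, together with the inclusion $P_b\subseteq P_a$, and track the effect of collapsing unpainted subtrees to weighted corollas. A covering move internal to one unpainted subtree becomes trivial under $\varphi$, since both of its endpoints have the same image in $\Fck_n$; a move inside the painted portion descends to the corresponding covering relation in $\Fck_n$; a move straddling the boundary between paint and forest is either absorbed by the combing or descends to a relation of $\Fck_n$; and the inclusion of painted ideals descends verbatim. Assembling these pieces gives $\varphi(a)\le\varphi(b)$.

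The step I expect to be the main obstacle is exactly this reconciliation of (iii) with the description of the order on $\Fck_n$ as ``the same relations as in $\Fm_n$ with the final step of combing the unpainted subtrees.'' One has to be certain that the moves $\varphi$ forgets---the Tamari rotations inside an unpainted subtree---are precisely the moves this combing normalization absorbs, so that no comparability in $\Fck_n$ is created or destroyed by the collapse, and that a rotation on a boundary edge behaves coherently once the forest has been replaced by weighted corollas. Once that identification is made precise---essentially the statement that the meet and join of $\Fck_n$ are the $\varphi$-images of those of $\Fm_n$---steps (iii) and (iv) reduce to the move-by-move bookkeeping above and the argument finishes as the one for $\gamma$.
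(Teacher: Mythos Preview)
Your proposal is correct and follows essentially the same four-step scheme as the paper, using the section $\varphi_>$; your treatment of the fibers (including the identification with $\prod_i \Fy_{w_i-1}$ and the care about convexity) is in fact slightly more detailed than the paper's. The only notable difference is in step~(iii): the paper argues order-preservation of $\varphi$ by working directly in $\Fck_n$---it separates the cases $P_a=P_b$ and $P_a\supset P_b$ and, in the latter, explicitly constructs the chain from $\varphi(b)$ down to $\varphi(a)$ by first performing Tamari moves on each unpainted right comb of $\varphi(b)$ until it matches the corresponding subtree of $\varphi(a)$, then raising the paint level---whereas you push covering moves of $\Fm_n$ forward through $\varphi$ and classify them as internal-to-forest, internal-to-paint, or boundary-straddling; the paper's construction sidesteps the boundary-move bookkeeping you flag as the main obstacle, so you may prefer to adopt that argument there.
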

\begin{proof}
We use the section $\varphi_>$. Again we need to show four things:
that $\varphi^{-1}(t)$  is an interval in $\Fm_n$ for any $w\in
\Fck_n$; that $\varphi$ and $\varphi_>$ both preserve order, and
that $\varphi\circ \varphi_>$ is the identity map. This last fact is
straightforward, since both maps will be the identity in this case
-- $\varphi_>$ will always be and $\varphi$ will be the identity
when applied to a painted tree with a forest of unpainted right
combs.

Recall that $\varphi$ involves replacing an unpainted forest $f$
with a forest of right combs. (Sometimes alternately drawn as
corollas or just a number.) Thus the fiber $\varphi^{-1}(w)$ for
$w\in \Fck_n$ is a collection of painted trees in $\Fm_n$ which
share the same set of painted nodes, and the same binary tree as the
subtree made up of those painted nodes -- but which may have any
forest of unpainted trees agreeing with those facts. Thus the fiber
is an interval bounded by choosing that forest to be all left or all
right combs. In fact  this is a cartesian product of associahedra.

Next to show that $\varphi$ preserves the order, we let $a<b \in
\Fm_n.$ This means that $t_a\le t_b$ in the Tamari order, and that
$P_a \supseteq P_b.$ First note that if $P_a = P_b,$ then
$\varphi(a)< \varphi(b)$ by Tamari moves in the painted nodes. If
$P_a \supset P_b,$ consider the forest of unpainted right combs of
$\varphi(b)$. Each of these combs has a leftmost node $k$. If the
corresponding node $k$ of $\varphi(a)$ is painted, then we can see
the relation as first performing Tamari moves on the right comb of
$\varphi(b)$ until we have  the binary tree supported by node $k$ of
$\varphi(a),$ and then allowing the paint level to rise to cover
node $k$ and any additional nodes to match $\varphi(a).$ These moves
performed on each unpainted comb of $\varphi(b)$ give us the result.

The section $\varphi_>$ is very simple; it merely returns us to the
maximum of the fiber. In fact, if we are using right combs for our
weighted trees then this section is the identity map, and so order
is clearly preserved.
\end{proof}

\begin{theorem}
The map $\hat{\varphi}$  is an interval retract from $\Fy_n$ to
$\Fq_n.$
\end{theorem}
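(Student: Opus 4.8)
The plan is to imitate the two preceding proofs. Using the section $\hat\varphi_>$, I would verify the four requirements of an interval retract: that $\hat\varphi\circ\hat\varphi_>=\mathrm{id}$, that $\hat\varphi$ and $\hat\varphi_>$ each preserve order, and that $\hat\varphi^{-1}(w)$ is an interval in $\Fy_n$ for every $w\in\Fq_n$. The composition condition is immediate, as before: $\hat\varphi_>$ replaces each unpainted corolla of a composition tree $w$ by a right comb on the same number of leaves, and applying $\hat\varphi$ recombs each of those right combs to a single corolla, returning $w$.

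For the fibers I would first identify $\hat\varphi^{-1}(w)$ concretely. Reading a tree $t\in\Fy_n$ through $bij_3$ as the forest $(f_1,\dots,f_k)$ of subtrees with initial nodes on its left limb, the map $\hat\varphi$ records (via $bij_5$) only the composition $(a_1,\dots,a_k)$ of leaf-counts of that forest; hence, for $w$ corresponding to $(a_1,\dots,a_k)$, the fiber is exactly the set of trees with that leaf-count composition, which is in bijection with $\Fy_{a_1-1}\times\cdots\times\Fy_{a_k-1}$, a Cartesian product of associahedra, just as for $\varphi$. To see this product sits in $\Fy_n$ as an interval, I would bring in the companion ``lesser'' section $\hat\varphi_<$, which uses left combs in place of right ones, and argue $\hat\varphi^{-1}(w)=[\hat\varphi_<(w),\hat\varphi_>(w)]$. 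The inclusion $\subseteq$ holds because, under $bij_3$, the components $f_i$ sit as genuine disjoint subtrees of $t$, so any Tamari move performed inside a single component is a Tamari move of the whole tree; since the left (resp.\ right) comb is the minimum (resp.\ maximum) of $\Fy_{a_i-1}$, componentwise comparability in the product yields comparability in $\Fy_n$, placing every fiber element between $\hat\varphi_<(w)$ and $\hat\varphi_>(w)$. The reverse inclusion follows once $\hat\varphi$ is known to be order-preserving, since then $\hat\varphi_<(w)\le t\le\hat\varphi_>(w)$ forces $w\le\hat\varphi(t)\le w$, that is, $t\in\hat\varphi^{-1}(w)$.

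It remains to show $\hat\varphi$ and $\hat\varphi_>$ are monotone. For $\hat\varphi_>$: a cover in $\Fq_n$ amounts to merging two adjacent parts of a composition, and one checks that replacing the corresponding two adjacent unpainted right combs (placed by $bij_3$) by the single right comb on the combined leaves is a step up in the Tamari order; monotonicity on covers then gives monotonicity. For $\hat\varphi$: given $s\le t$ in $\Fy_n$, factor a saturated chain from $s$ to $t$ into single Tamari moves and classify them --- a move along an edge internal to one forest component leaves the leaf-count composition unchanged, while a move involving an edge of the left limb changes that composition by one elementary coarsening (or refinement), in the direction that agrees with the order of $\Fq_n$; concatenating these steps gives $\hat\varphi(s)\le\hat\varphi(t)$.

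The step I expect to be the real obstacle is precisely this last classification: pinning down the dictionary between Tamari covering moves on $t$ and the bijection $bij_3$ --- deciding which rotations fix the forest's leaf-count composition and checking that the remaining ones move it monotonically, with the orientation of the Boolean order on $\Fq_n$ chosen to match. That same dictionary underlies the $\subseteq$ half of the fiber computation and the monotonicity of $\hat\varphi_>$, so once it is made explicit the four verifications close up just as in the $\gamma$ and $\varphi$ cases. (Alternatively, one could try to realize $\Fy_n$ and $\Fq_n$ as the images of the sections $\gamma_<$ and $\hat\gamma_<$ inside $\Fm_n$ and $\Fck_n$ and recognize $\hat\varphi$ as the restriction of the already-established interval retract $\varphi$, using that the intersection of an interval with a sublattice is an interval in that sublattice; but this would require first checking that those section-images are sublattices, which is not evident and may be no simpler than the direct argument.)
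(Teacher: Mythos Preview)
Your direct approach is viable and would close up with the bookkeeping you anticipate, but the paper takes precisely the alternative you sketch in your final paragraph and then set aside. It views $\Fy_n$ through $bij_3$ (equivalently, through the section $\gamma_<$) as ordered forests grafted onto painted left combs inside $\Fm_n$; in that incarnation the order on $\Fy_n$ is literally the order inherited from $\Fm_n$, and $\hat\varphi$ is the restriction of $\varphi$. The identity $\hat\varphi\circ\hat\varphi_>=\mathrm{id}$ and the fact that fibers are intervals then come for free from the already-proven theorem about $\varphi$; only the compatibility of the Boolean order on $\Fq_n$ with the tree order, and the monotonicity of $\hat\varphi_>$ (as inclusion), need a separate short check.

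Your worry about needing the section-images to be sublattices is what led you away from this route, but that concern does not actually bite. The point is not that $\hat\varphi$-fibers are \emph{intersections} of $\varphi$-fibers with $\gamma_<(\Fy_n)$; they \emph{coincide} with certain $\varphi$-fibers. Indeed, a $\varphi$-fiber consists of all painted trees with a fixed painted part and arbitrary unpainted forests of prescribed leaf-counts; when the fixed painted part is a left comb, that entire fiber already lies in $\gamma_<(\Fy_n)$. So the interval property transfers wholesale, without any appeal to sublattice closure. This is what makes the paper's argument shorter than your direct one: the Tamari-move dictionary you flagged as the real obstacle is bypassed entirely by reusing the work done for $\varphi$.
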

\begin{proof}
It is easiest to see this when viewing $\Fy_n$ in its incarnation as
ordered forests of binary trees, grafted onto painted left combs.
Then the ordering of $\Fy_n$ is directly inherited from $\Fm_n,$ and
the map $\hat{\varphi}$ is the same as $\varphi.$ Thus the facts
that the fibers are intervals and that
$\hat{\varphi}\circ\hat{\varphi}_>$ is the identity are already
proven.

Now the elements of $\Fq_n$ are being drawn as composite trees
(using combs or corollas) but we need to check that the usual
ordering by inclusion of subsets (of unpainted nodes) agrees with
the tree order. That is, if $p<q$ as elements of $\Fy_n$ (each drawn
as an unpainted forest grafted to a left comb) then
$\hat{\varphi}(p)\le \hat{\varphi}(q).$ By viewing two elements of
$\Fq_n$ as forests of right combs grafted to painted left combs, we
see that the only relation inherited from $\Fm_n$ is that of the
containment of the sets of painted nodes. Thus since in $\Fm_n$ a
larger set of painted nodes is a lesser element, here a smaller set
of unpainted nodes is the lesser element.

Finally the section $\hat{\varphi}_>$ is given by inclusion of the
composite tree as a forest of right combs grafted to a painted left
comb, which ensures that the ordering is preserved.
\end{proof}

\begin{theorem}
The map $\hat{\gamma}$  is an interval retract from $\Fck_n$ to
$\Fq_n.$
\end{theorem}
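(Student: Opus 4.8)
I would argue exactly as in the proof for $\hat{\varphi}$, by viewing $\Fq_n$ in the incarnation in which its elements are composition trees --- ordered forests of unpainted right combs grafted to a painted corolla. In this incarnation $\Fck_n$ sits inside $\Fm_n$ as the painted trees whose unpainted forest is a forest of right combs (on which the ``final combing step'' in the definition of the order of $\Fck_n$ does nothing, so the order of $\Fck_n$ is simply the restriction of that of $\Fm_n$), and $\Fq_n$ sits inside $\Fck_n$ as those weighted trees whose painted part is a single corolla; moreover $\hat{\gamma}$ is just the restriction of $\gamma$ to $\Fck_n$, and the section $\hat{\gamma}_<$ is the restriction of $\gamma_<$ to $\Fq_n$. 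As before, four things must be checked: that $\hat{\gamma}^{-1}(w)$ is an interval in $\Fck_n$ for each $w\in\Fq_n$; that $\hat{\gamma}$ and $\hat{\gamma}_<$ preserve order; and that $\hat{\gamma}\circ\hat{\gamma}_<=\mbox{id}$. The last is immediate: $\hat{\gamma}_<$ expands the painted corolla into a painted left comb and then $\hat{\gamma}$ collapses that painted left comb back to a corolla, leaving the unpainted forest of right combs untouched.

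For a $w\in\Fq_n$ whose unpainted forest has $k$ right combs, the fiber $\hat{\gamma}^{-1}(w)$ is precisely the $\gamma$-fiber $\gamma^{-1}(w)$ --- all painted trees with that unpainted forest and an arbitrary painted binary tree on $k-1$ nodes above it --- which already lies inside $\Fck_n$. By the theorem on $\gamma$ this is an interval of $\Fm_n$ isomorphic to the Tamari lattice $\Fy_{k-1}$, with least and greatest elements obtained by taking the painted tree to be the left comb and the right comb; since it lies in $\Fck_n$ and $\Fck_n$ is an induced sub-poset of $\Fm_n$, it is an interval of $\Fck_n$. Order-preservation of $\hat{\gamma}$ and $\hat{\gamma}_<$ is then inherited verbatim from the already-proved order-preservation of $\gamma$ and $\gamma_<$, using that the inclusions $\Fq_n\hookrightarrow\Fck_n\hookrightarrow\Fm_n$ are order-preserving. (One could instead exploit the commuting square $\hat{\varphi}\circ\gamma=\hat{\gamma}\circ\varphi$ and the order-preserving section $\varphi_>$ to write $\hat{\gamma}=\hat{\varphi}\circ\gamma\circ\varphi_>$ and read off order-preservation from the earlier results.)

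The step that will take the most care is the bookkeeping underpinning this whole picture: verifying that, under $bij_5$, the Boolean order on subsets of $[n]$ agrees with the order $\Fck_n$ induces on composition trees --- equivalently, that containment of the sets of unpainted vertices corresponds to the tree order after combing --- so that $\Fq_n$ really is an induced sub-poset and the restrictions above are legitimate. This is the precise analogue of the reconciliation carried out in the proof for $\hat{\varphi}$ (there phrased via containment of sets of painted nodes), and once it is in place the remaining assertions are transcriptions of the $\gamma$, $\gamma_<$ and $\hat{\varphi}$ arguments.
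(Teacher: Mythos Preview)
Your proposal is correct and follows essentially the same approach as the paper. The paper's proof is a two-sentence appeal to the $\hat{\varphi}$ argument: view $\Fck_n$ as forests of right combs grafted to a painted binary tree, view $\hat{\gamma}$ as replacing the painted part by a left comb, and proceed as before. You spell out exactly this --- embedding $\Fck_n\hookrightarrow\Fm_n$ via $\varphi_>$, identifying $\hat{\gamma}$ and $\hat{\gamma}_<$ as the restrictions of $\gamma$ and $\gamma_<$, and noting that the fiber and order-preservation claims then transfer directly --- and you correctly flag the one piece of genuine content, namely the compatibility of the Boolean order on $\Fq_n$ with the induced tree order, as the analogue of the check done in the $\hat{\varphi}$ proof.
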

\begin{proof}
We already view an element of $\Fck_n$ as a forest of right combs
grafted to a painted binary tree. Viewing $\hat{\gamma}$ as
replacing the painted nodes with a left comb, we see the proof
proceeds just as for $\hat{\varphi}.$
\end{proof}

\section{Lattices and polytopes.} %\index{polytope}
Next we point out that the four interval retracts just defined
extend to well known cellular projections of the polytopes. These
projections are not the same ones that appear in the work of Reading
\cite{F_Rea:2005},  Loday and Ronco \cite{F_LodRon:1998}, or Tonks
\cite{F_Ton:1997}. Rather they are found implicitly in the work of
Boardman and Vogt on maps of homotopy $H$-spaces
\cite{F_BoaVog:1973}.

Recall that the combinatorial lattices of trees we have discussed
here all occur conveniently as the labels of vertices on convex
polytopes. The Hasse diagrams are specific drawings of the
1-skeleton of each polytope. The polytopes are associated to
another, entirely different, lattice: the poset of their faces, with
the empty set adjoined as least element. It is an open question as
far as I know whether there is any describable relationship between
the two lattices, for instance between the Tamari lattice and the
face-poset of the associahedron.

As indicated in the introduction (by our use of the same symbol for
both polytope and lattice) we have the following correspondence:
binary trees label vertices of the associahedra; ordered binary
trees the permutohedra; painted binary trees the multiplihedra,
weighted trees the composihedra; composition trees the hypercubes.
The higher dimensional faces of these polytopes are all associated
to further generalizations of the trees in question, by allowing
more non-binary nodes and by allowing painting to end precisely on a
node.

The Hopf algebras of binary trees, ordered trees and Boolean subsets
have all been extended to larger Hopf algebras on the faces of the
corresponding polytopes. This was achieved by Chapoton in
\cite{F_Cha:2000}. It is the topic of future study that similar
expansions exist for the multiplihedra and composihedra.

Here we show how the projections discussed in this paper appear as
collapsing of the faces of the polytopes whose vertices they act
upon. Figure~\ref{F_fig:_multi_above_perm} shows an alternate
``above'' view of the permutohedron. This is given in order to
facilitate contrasting the various projection maps.
\begin{figure}[htb]
\[
    \includegraphics[width = 2.5in]{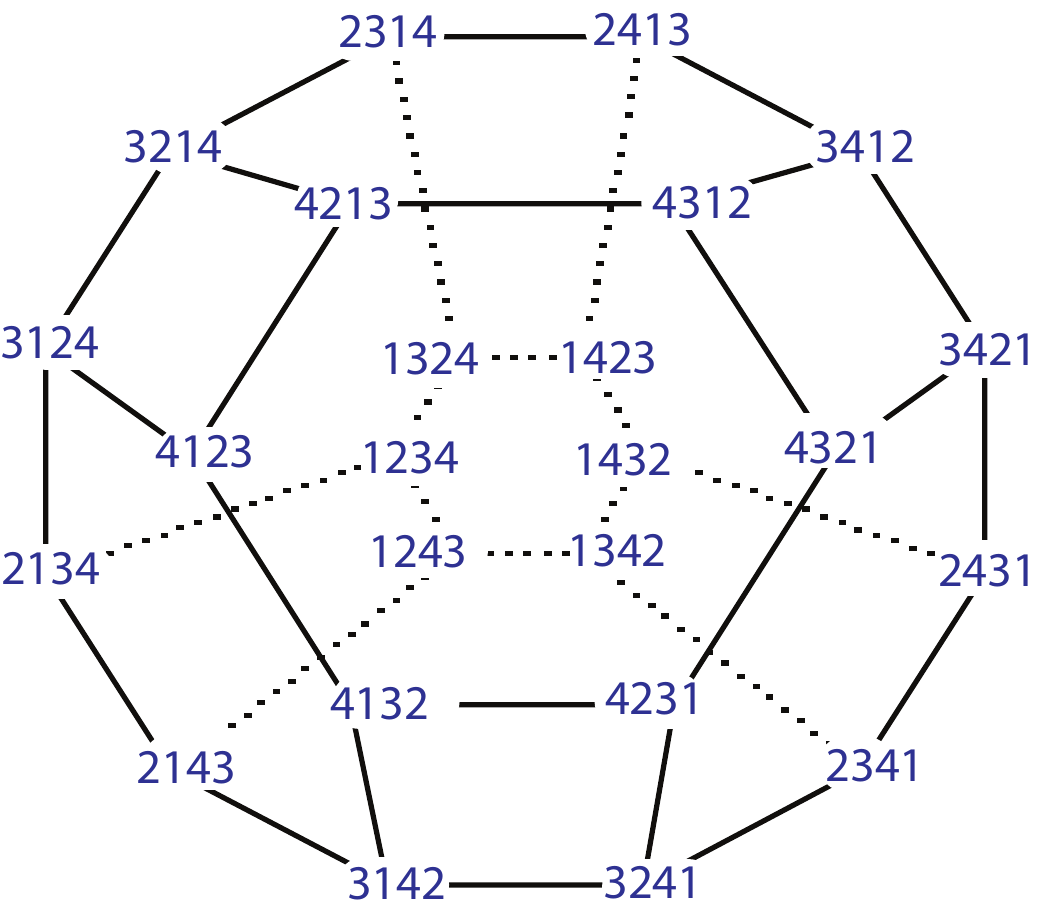}
\]
\caption{The 3d permutohedron, alternate view.}
\label{F_fig:_multi_above_perm}
\end{figure}

 Figure~\ref{F_fig:_multi_collapse_compare} offers contrast and comparison of our new maps to the classic projections, showing the faces that
are retracted. We use the ``above'' view of the posets. In the pictured 3d case
it is apparent that the two compositions of maps,
$\hat{\phi}\circ\phi\circ\beta$ (which is the Tonks projection) and
$\hat{\varphi}\circ\gamma\circ\beta$, have quite different actions on the
permutohedron. The number of collapsed cells are the same in both composite
projections, but in the first the image of the collapsed cells (4 hexagons and
4 rectangles) is a copy of $S^1$ where in the second the image (of 2 hexagons
and 6 rectangles) consists of two disjoint star graphs.

Next, for comparison, is the Tonks projection again, factored through the
cyclohedron as in \cite{F_ForSpr:2010}. Finally for further contrast we include
the projection $\eta$ defined by Reading in his theory of Cambrian lattices
\cite{F_Read:now}. %\index{polytope!projection}
\begin{figure}[htb]
\[
    \includegraphics[width = \textwidth]{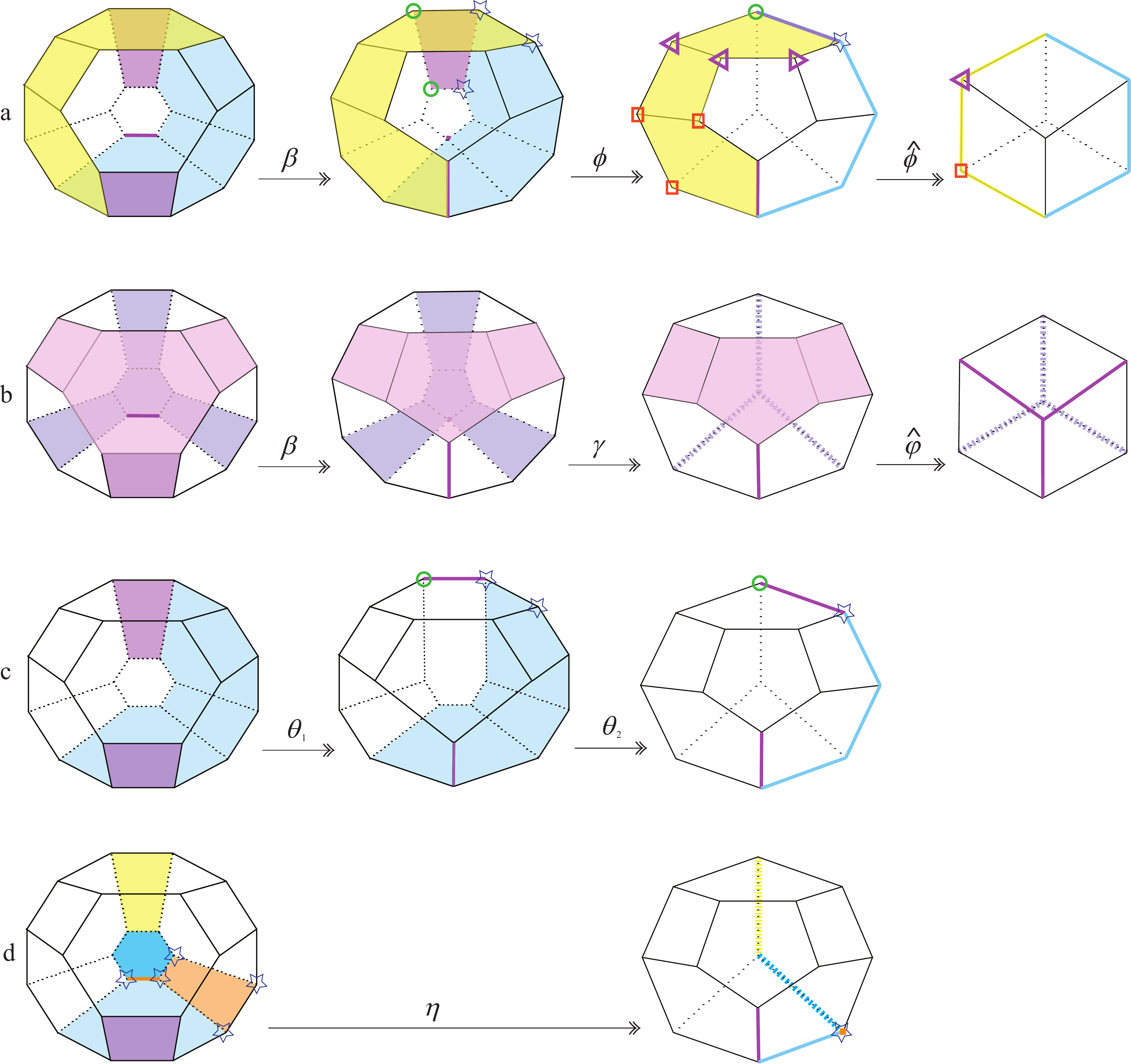}
\]
\caption{In a, b, c and d the permutohedron is oriented as in
Figure~\ref{F_fig:_multi_above_perm}. The shaded facets and edges
are collapsed in succession to similarly shaded edges and points. In
a the circled and starred vertices in the domain of $\phi$ are
mapped one and all to the circled (respectively starred) vertex in
the range, and likewise for the squared and triangled vertices in
the domain and range of $\hat{\phi}$. In b both $\gamma$ and
$\hat{\varphi}$ collapse a pentagon in their respective domains to a
single vertex in their respective ranges. In c, the central polytope
is the 3d cyclohedron. Finally in d $\eta$ takes all the starred
vertices to a single vertex.} \label{F_fig:_multi_collapse_compare}
\end{figure}

The factorization of the Tonks projection, $\Theta =
\theta_2\circ\theta_1,$ through the cyclohedron seen in
Figure~\ref{F_fig:_multi_collapse_compare} deserves some special
mention. First, this factorization is defined in greater generality
\cite{F_ForSpr:2010} in terms of \emph{tubings} of simple graphs.
%\index{cyclohedron}

\begin{definition}
Let $G$ be a finite connected simple graph, with $n$ numbered nodes.
A \emph{tube} is a set of nodes of $G$ whose induced graph is a
connected subgraph of $G$. Two tubes $u$ and $v$ may interact on the
graph as follows:
\begin{enumerate}
\item Tubes are \emph{nested} if  $u \subset v$.
\item Tubes are \emph{far apart} if $u \cup v$ is not a tube in $G,$ that is, the induced subgraph of the union
 is not connected, or
none of the nodes of $u$ are adjacent to a node of $v$.
\end{enumerate}
Tubes are \emph{compatible} if they are either nested or far apart.
We call $G$ itself the \emph{universal tube}.
 A \emph{tubing} $T$ of $G$ is a set of tubes of $G$ such that every pair of tubes in $T$ is
 compatible; moreover, we force every tubing of $G$ to contain (by default) its universal tube.
By the
 term $k$-\emph{tubing} we refer to a tubing made up of $k$ tubes, for $k \in \{1,\dots,n\}.$
 \end{definition}%\index{tubing}

 \begin{theorem} {\textup{\cite[Section 3]{F_dev-carr}}}
For a graph $G$ with $n$ nodes, the \emph{graph associahedron}
${\mathcal K} G$ is a simple, convex polytope of dimension $n-1$
whose face poset is isomorphic to the set of tubings of $G$, ordered
such that $T \prec T'$ if $T$ is
 obtained from $T'$ by adding tubes.
\label{d:pg}
\end{theorem}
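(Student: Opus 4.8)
The plan is to realize $\mathcal{K}G$ explicitly as an iterated truncation of a simplex and then identify its face poset with the poset of tubings, arguing by induction on the number $n$ of nodes of $G$. When $n=1$ the graph is a point, $\mathcal{K}G$ is a point, and the only tubing is the universal one, so the claim holds. For the inductive step, begin with the simplex $\Delta=\Delta^{n-1}$ and label its $n$ facets $F_1,\dots,F_n$ by the nodes of $G$; for a proper subset $S\subsetneq[n]$ put $f_S=\bigcap_{i\in S}F_i$, a face of $\Delta$ of dimension $n-1-|S|$, so that $f_S\subseteq f_{S'}$ iff $S\supseteq S'$ and $f_{\{i\}}=F_i$. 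Enumerate the proper tubes of $G$ as $t_1,\dots,t_m$ in an order of \emph{non-increasing} cardinality, and form polytopes $\Delta=P_0,P_1,\dots,P_m$, where $P_j$ is obtained from $P_{j-1}$ by truncating --- slicing off with a generic hyperplane lying close to it --- the face of $P_{j-1}$ descending from $f_{t_j}$; set $\mathcal{K}G:=P_m$. Since truncating a face of a simple convex polytope yields a simple convex polytope of the same dimension, $\mathcal{K}G$ is automatically simple, convex, and of dimension $n-1$, and all the work lies in the face poset.

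The structural heart of the argument, carried by the induction, is the claim that at step $j$ the face of $P_{j-1}$ descending from $f_{t_j}$ is genuinely a face, combinatorially isomorphic to $\mathcal{K}(G\setminus t_j)$, where $G\setminus t_j$ is the reconnected complement --- delete the nodes of $t_j$ and add an edge between any two survivors that were both adjacent to $t_j$ --- and that the new facet it creates, call it $f^{*}_{t_j}$, becomes in $\mathcal{K}G$ combinatorially isomorphic to $\mathcal{K}(G\setminus t_j)\times\mathcal{K}(G|_{t_j})$, where $G|_{t_j}$ is the subgraph induced on the node set $t_j$. The first half uses that the faces of $\Delta$ strictly inside $f_{t_j}$ are exactly the $f_S$ with $S\supsetneq t_j$, that $S\mapsto S\setminus t_j$ is a bijection from the proper tubes of $G$ strictly containing $t_j$ onto the proper tubes of $G\setminus t_j$ (connectivity of $S$ in $G$ passes, through the block $t_j$, to connectivity of $S\setminus t_j$ in the reconnected complement), and that the non-increasing size order guarantees all those $f_S$ have already been truncated by step $j$, so the simplex $f_{t_j}$ --- which sits on the node set of $G\setminus t_j$ --- has become $\mathcal{K}(G\setminus t_j)$ by the inductive hypothesis. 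The second half uses that, among the truncations performed after step $j$, those that cut $f^{*}_{t_j}$ are exactly the ones for the faces $f_S$ with $S\subsetneq t_j$ (these contain $f_{t_j}$), and that they remove from the simplex factor on the node set $t_j$ precisely the sub-simplices indexed by the tubes of $G|_{t_j}$, turning that factor into $\mathcal{K}(G|_{t_j})$. In particular $\mathcal{K}G$ has exactly one facet for each proper tube of $G$, the singleton tubes giving back the original facets of $\Delta$ (their truncation being vacuous).

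With this in hand the poset identification is short. Assign to a nonempty face $F$ of the simple polytope $\mathcal{K}G$ the set of proper tubes indexing the facets containing $F$, together with the universal tube; since a face of codimension $r$ lies in exactly $r$ facets and is their intersection, this is an injection from codimension-$r$ faces to $(r{+}1)$-element sets of proper tubes. The processing order forces such a set to be a tubing: if proper tubes $t,t'$ are not compatible then $t\cup t'$ is connected and is either a proper tube --- truncated before both, so $f^{*}_{t}\cap f^{*}_{t'}=\varnothing$ --- or is all of $[n]$, so $f_{t\cup t'}$ was already empty in $\Delta$ and again $f^{*}_{t}\cap f^{*}_{t'}=\varnothing$; whereas if they are far apart then $t\cup t'$ is not a tube, the face $f_{t\cup t'}$ survives every truncation, and $f^{*}_{t}\cap f^{*}_{t'}\neq\varnothing$. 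Hence faces of $\mathcal{K}G$ correspond to tubings, inclusion-preservingly: a smaller face lies in more facets, so maps to a tubing with more tubes, which is lower in the order $T\prec T'$. Surjectivity --- that the $r$ facets indexed by an $(r{+}1)$-tubing really meet in a codimension-$r$ face --- closes by the same induction: on the facet $\mathcal{K}(G\setminus t)\times\mathcal{K}(G|_{t})$, a tubing of $G$ containing $t$ splits as a tubing of $G\setminus t$ together with a tubing of $G|_{t}$, so the inductive hypothesis on both smaller graphs produces the face. The main obstacle I anticipate is precisely this recursive bookkeeping: verifying at each truncation step that the face being cut is the one the recursion predicts and that truncation meshes with the earlier and later cuts as described, so that faces are created and destroyed in exact accordance with the tubings; once that is in hand the isomorphism of posets drops out.
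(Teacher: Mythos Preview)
The paper itself does not prove this theorem; it is stated with a citation to Carr--Devadoss and used as a black box. Your truncation-of-the-simplex construction is precisely the one employed in that reference, so the overall strategy matches the original source.

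That said, the inductive bookkeeping in your second paragraph contains a genuine error. You assert that $S\mapsto S\setminus t_j$ gives a bijection from the proper tubes of $G$ strictly containing $t_j$ onto the proper tubes of the reconnected complement $G\setminus t_j$. This map is injective but not surjective. For a counterexample take $G$ to be the path on nodes $1,2,3,4$ and $t_j=\{2\}$: the reconnected complement is the path on $1,3,4$ (with $1$ now adjacent to $3$), and $\{4\}$ is one of its proper tubes, yet $\{2,4\}$ is disconnected in $G$ and hence not a tube, so $\{4\}$ is not hit. The missing preimages are exactly the tubes of $G$ \emph{far apart} from $t_j$; the correct correspondence sends proper tubes of $G\setminus t_j$ to the tubes of $G$ compatible with $t_j$ and not contained in it---those strictly containing $t_j$ together with those far apart from it. As a consequence your claim that the face of $P_{j-1}$ descending from $f_{t_j}$ is already combinatorially $\mathcal{K}(G\setminus t_j)$ at step $j$ fails: truncations for tubes $S$ far apart from $t_j$, which may well have cardinality at most $|t_j|$ and hence occur later, also cut into that face (they meet $f_{t_j}$ along the nonempty face $f_{t_j\cup S}$). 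The product description $\mathcal{K}(G\setminus t_j)\times\mathcal{K}(G|_{t_j})$ of the facet $f^{*}_{t_j}$ is correct, but it only materializes in the final polytope, after \emph{all} truncations are complete. Your own closing remark anticipated that this recursive bookkeeping would be the sticking point; it needs to be reworked so that compatibility with $t_j$, rather than containment of $t_j$, governs which truncations interact with $f_{t_j}$ and $f^{*}_{t_j}$.
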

 The vertices of the graph associahedron are the $n$-tubings of $G.$
 Faces of dimension $k$ are indexed by $(n-k)$-tubings of $G.$

As seen in \cite{F_dev-carr}, the permutohedron $\Fs_n ={\mathcal K}
G$ where $G$ is the complete graph on $n$ nodes; the associahedron
$\Fy_n ={\mathcal K} G$ where $G$ is the path graph on $n$ nodes;
the cyclohedron is $\mathcal W_n = {\mathcal K} G$ when $G$ is the
cycle on $n$ nodes; and the stellohedron is ${\mathcal K} G$ when
$G$ is the star graph on $n$ nodes. %\index{cyclohedron}

The question might be asked: how easily may the weak order on
permutations and the Tamari order be generalized to $n$-tubings on a
graph with nodes numbered $1,...,n$?
 In order to describe the ordering we give the covering relations.
 We can use the same notation as when comparing tubings in the poset
 of faces of the graph associahedron since in that poset the
 $n$-tubings are not comparable.

\begin{definition} Two $n$-tubings $T, T'$ are in a covering relation $T'\prec T$ if they have
all the same tubes except for one differing pair. We actually
compare the outermost nodes, one from each of the pair of differing
tubes. The outermost node of a tube is the node that is included in
no other smaller sub-tube of the tubing. If the number of that node
is greater for $T$, then $T$ covers $T'.$
\end{definition}
Note that each such covering relation corresponds to a unique
$(n-1)$-tubing: the one resulting from removing the differing tubes.
Thus the covering relations correspond to the edges of the graph
associahedron.

For example, in Figure~\ref{F_fig:tube_perm} we show a covering relation
between two tubings on the complete graph on four numbered nodes. This figure
also demonstrates the bijection between $n$-tubings and permutations of $[n].$
The nodes are the inputs for the permutation, and the output is the relative
tube size. E.g., in the left-hand permutation the image of 2 is 1, and so we
put the smallest tube around 2. To see the relation via tubes, we write down
the sets of nodes in each tube. Only one pair of tubes differs.  We compare the
two numbered nodes of these which are in no smaller tubes. Here $(3 1 2 4)
\prec (4 1 2 3)$  since $1<4.$
\begin{figure}[htb]
\[
    \includegraphics[width=\textwidth]{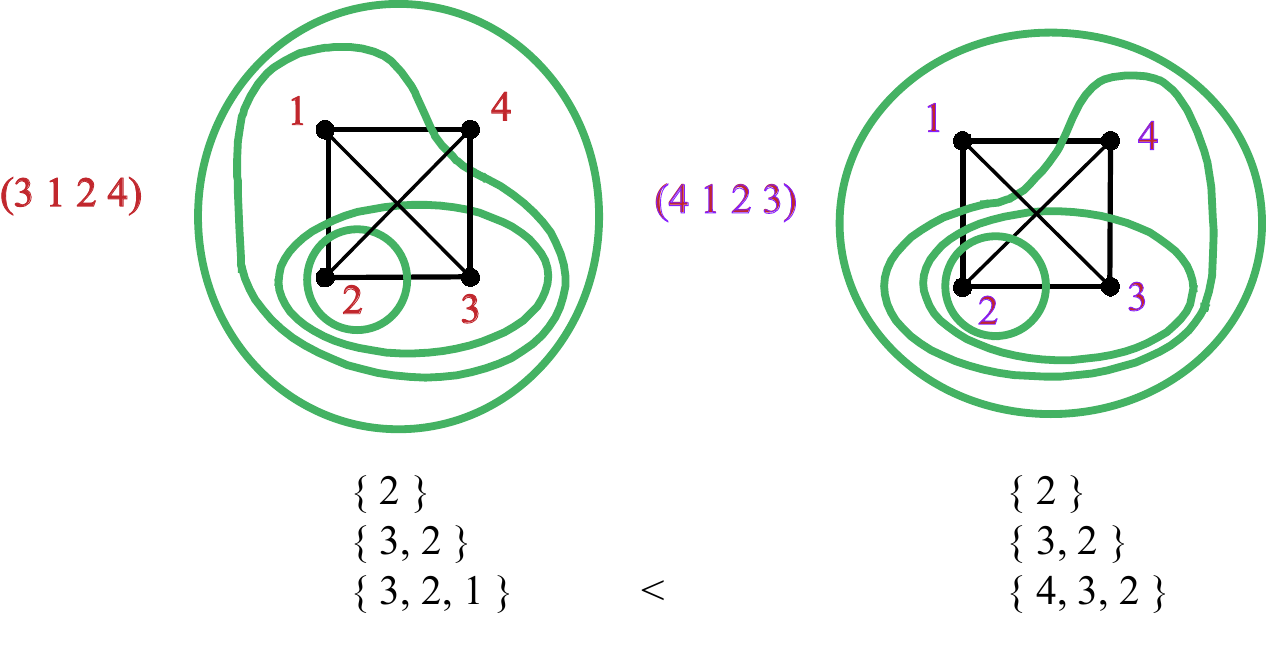}
\]
\caption{A covering relation in the weak order on
permutations.}\label{F_fig:tube_perm}
\end{figure}

It turns out that the relation generated by these covering relations of tubings
has been independently demonstrated to be a poset by Ronco \cite{F_Ronco:now}.
In her article, the poset we have described on $n$-tubings of a graph is seen
as the restriction of a larger poset on all the tubings of a graph.

Figure~\ref{F_fig:tube_cycle} shows  the lattice that results from
the cycle graph, rocovering the cyclohedron in dimension 3.
%\index{lattice!cyclohedron}
 The Hasse diagram is combinatorially
equivalent to the 1-skeleton of the cyclohedron. Notice that this is quite
different from the type $B_3$ Cambrian lattice described by Reading in this
volume \cite{F_Read:now}, despite the fact that the latter also is
combinatorially equivalent to the 1-skeleton of the cyclohedron.
Figure~\ref{F_fig:tube_star} shows the corresponding lattice on $4$-tubings of
the star graph on 4 nodes. This Hasse diagram is combinatorially equivalent to
the 1-skeleton of the 3d stellohedron. Figure~\ref{F_fig:tube_bare} shows both
the cyclohedron and stellohedron lattices again, unlabeled, with a different
view of each polytope for comparison.
\begin{figure}[htb]
\[
    \includegraphics[width=\textwidth]{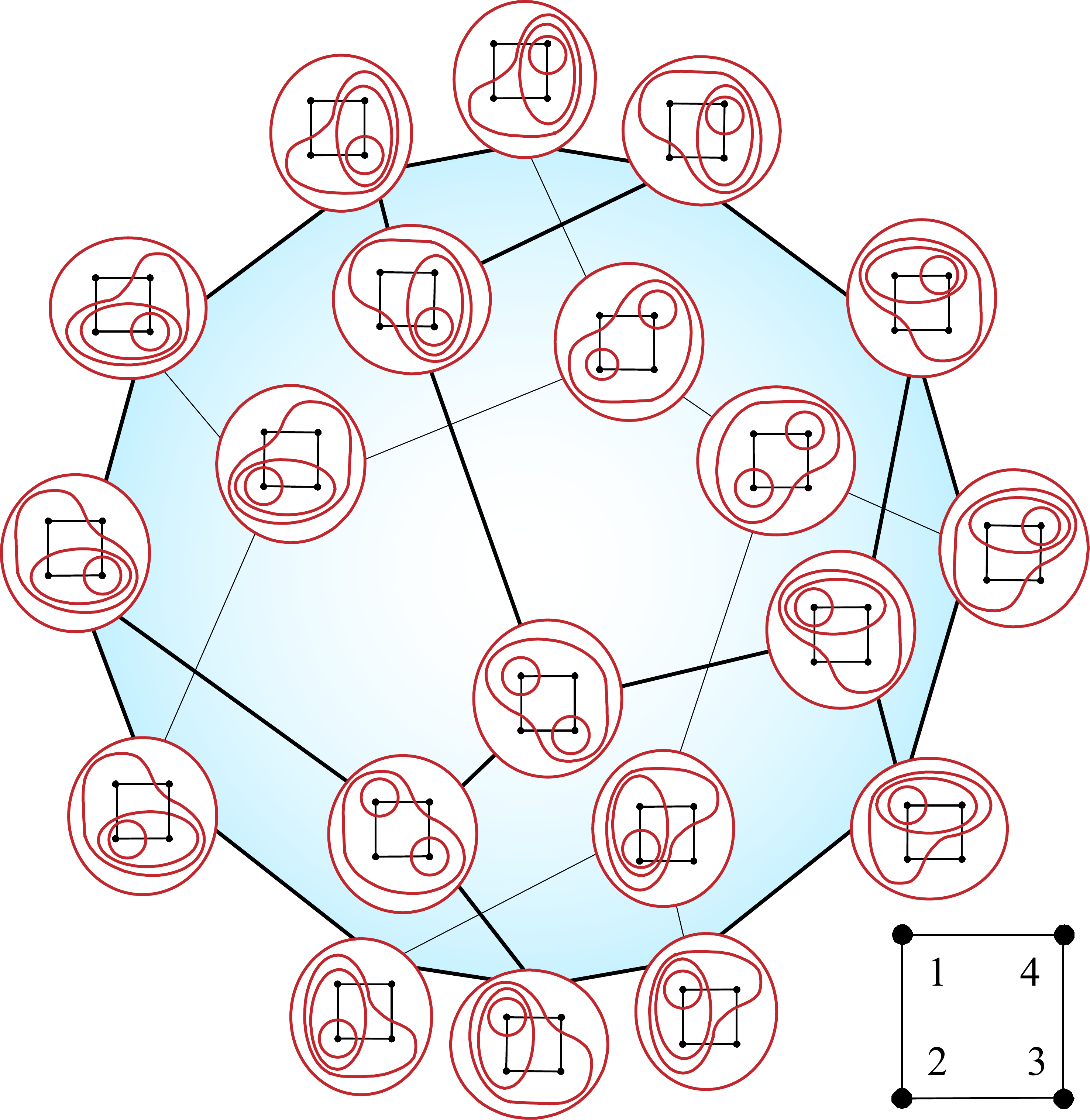}
\]
\caption{This Hasse diagram is labeled by tubings of the cycle
graph, with nodes numbered 1--4. The covering relations are also a
picture of the edges of the cyclohedron.}\label{F_fig:tube_cycle}
\end{figure}
\begin{figure}[htb]
\[
    \includegraphics[width=\textwidth]{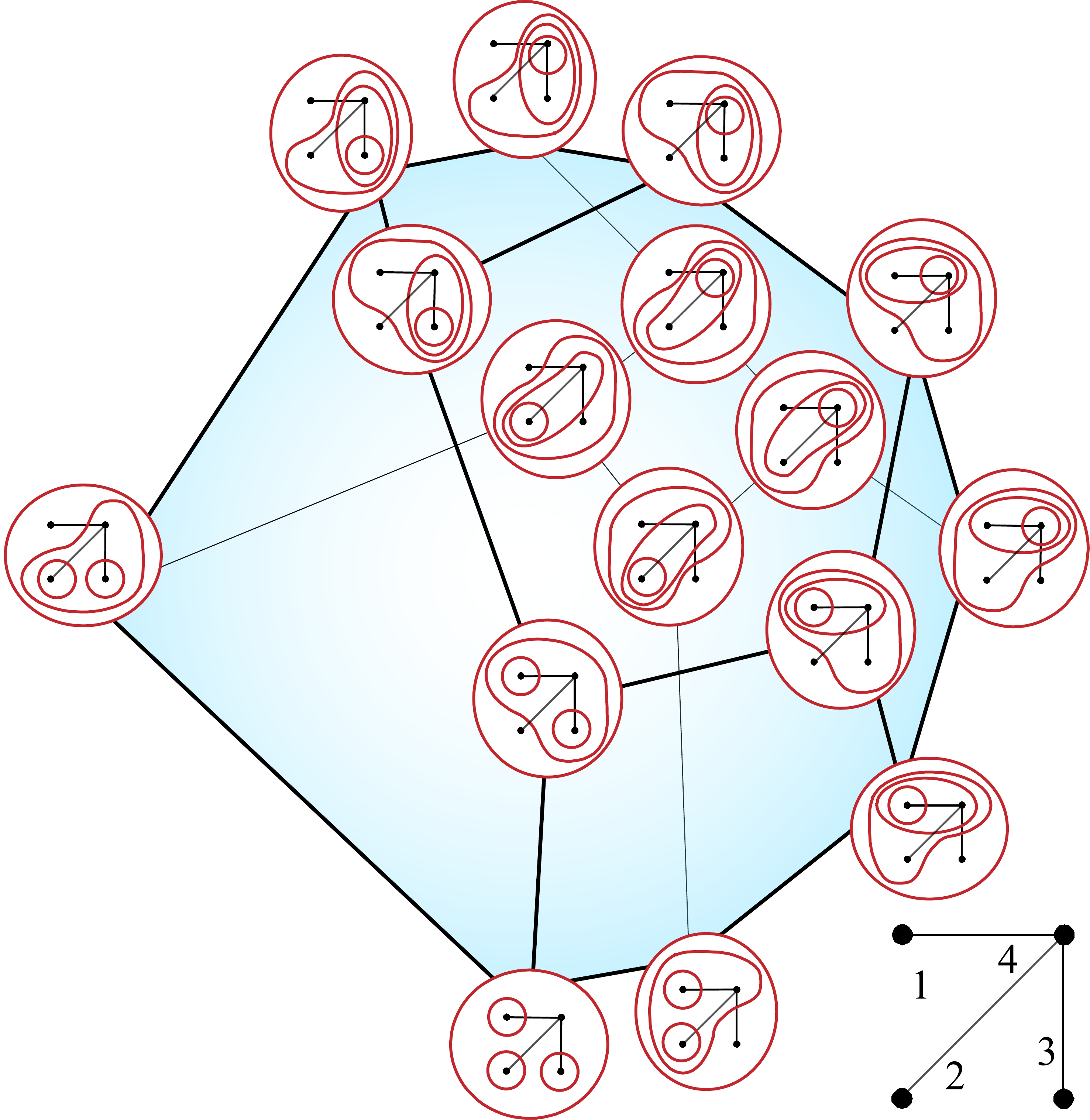}
\]
\caption{This Hasse diagram is labeled by tubings of the star graph,
with nodes numbered 1--4. The covering relations are also a picture
of the edges of the stellohedron.}\label{F_fig:tube_star}
\end{figure}
\begin{figure}[htb]
\[
  \includegraphics[width=\textwidth]{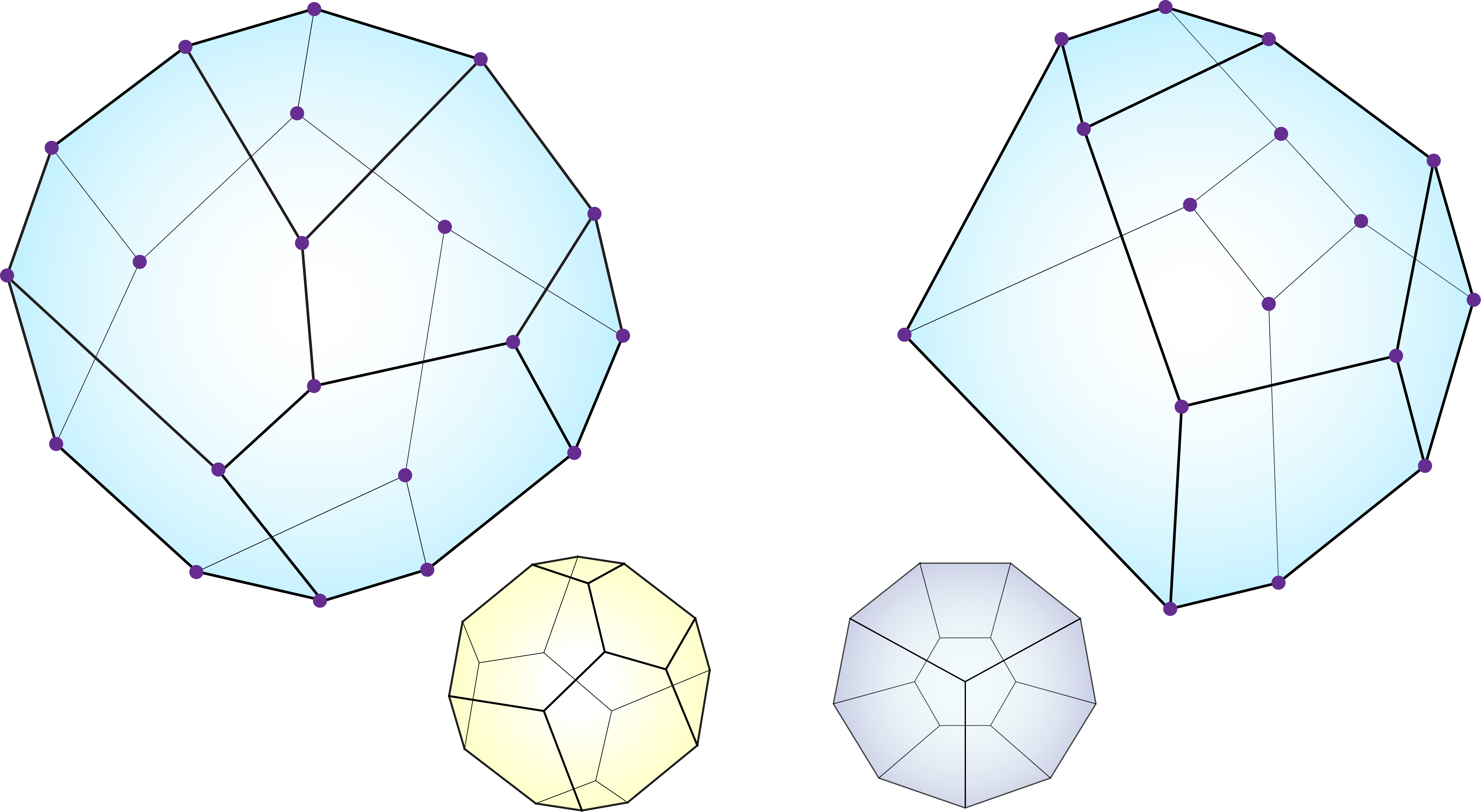}
 \]
\caption{On the left is the Hasse diagram for $n$-tubings of the
cycle graph, with the cyclohedron pictured below for comparison. On
the right is the Hasse diagram for $n$-tubings of the star graph,
with the stellohedron pictured below.}\label{F_fig:tube_bare}
\end{figure}

We note that as seen in Ronco's article \cite{F_Ronco:now}, the
Tamari lattice is found as the lattice of $n$-tubings on the path
graph with nodes numbered $1,\dots,n$ in the order that they are
connected by edges.  Several open questions present themselves: for
one, we notice that the 3-dimensional graph associahedra pictured
here have associated posets which upon inspection prove to be
lattices--it is not clear that they always are.

\section{Algebraic implications of interval retracts} %\index{Hopf algebra}
Finally we point out the importance of these lattices to the Hopf
algebras defined as spans of their elements. For each of the
lattices studied here, there is a graded vector space given by the
direct product of the spans of the vertices of the $n$-dimensional
polytope. For instance a vector space of binary trees is defined as:
$$ \Fysym = \bigoplus_{n\geq0} span~{\Fy_n} $$
The binary trees index a basis, called the fundamental basis and
denoted $\{F_t~|~t\in \Fy_n\}.$ There is a graded Hopf algebra
structure on this vector space, well studied in
\cite{F_AguSot:2006}. Similarly
$$ \Fssym = \bigoplus_{n\geq0} span~{\Fs_n} $$
 is a graded Hopf algebra on ordered trees, well studied in
 \cite{F_AguSot:2005}.

 Here we will restrict our attention to the coalgebra structures,
 which interact in important ways with the lattice structures.
The remainder of this section is taken in part from \cite{F_FLS3}. It is
included in order to demonstrate the algebraic importance of the lattice
structures.

 \subsection{Coalgebras of trees} %\index{coalgebra}
 We
define \emph{splitting} a binary tree $w$ along the path from a leaf to the
root to yield a pair of binary trees,
\[\includegraphics{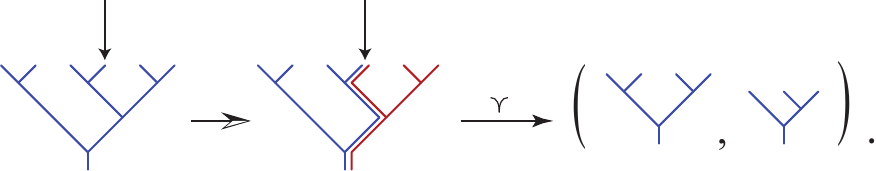}
\]
Write $w\Fpsplit(w_0,w_1)$ when the  pair of  trees $(w_0,w_1)$ is
obtained by splitting $w$.

 \begin{definition}[Coproduct on $\Fysym$]\label{F_def: ysym}
 Given a binary tree $t$, define the coproduct in the fundamental
 basis by
 \begin{gather*}
        \Delta(F_t)\ =\ \sum_{t \Fpsplit (t_0,t_1)} F_{t_0} \otimes F_{t_1}
        .
 \end{gather*}
 \end{definition}
Here is an example:\newline
\[\includegraphics{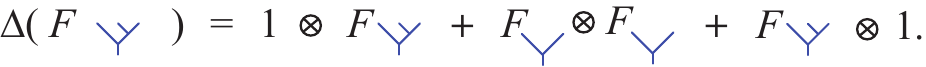}
\]

\subsection{Cofree composition of coalgebras}\label{F_sec: cccc main results}
The following is excerpted with edits from \cite{F_FLS3}. Let
${\mathcal{C}}$ and ${\mathcal{D}}$ be graded coalgebras. We form a
new coalgebra ${\mathcal{E}}={\mathcal{D}}\circ{\mathcal{C}}$ on the
vector space
 \begin{gather}\label{F_eq: E_(n)}
  {\mathcal{D}}\circ{\mathcal{C}}\ := \ \bigoplus_{n\geq0} {\mathcal{D}}_n \otimes {\mathcal{C}}^{\otimes(n+1)} \,.
 \end{gather}
We write
${\mathcal{E}}=\bigoplus_{n\geq0}{\mathcal{E}}_{\FDdeg{n}}$, where
${\mathcal{E}}_{\FDdeg{n}}= {\mathcal{D}}_n \otimes
{\mathcal{C}}^{\otimes(n+1)}$. This gives a coarse coalgebra grading
of ${\mathcal{E}}$ by \emph{${\mathcal{D}}$-degree}. There is a
finer grading of ${\mathcal{E}}$ by \emph{total degree}, in which a
decomposable tensor $c_0\otimes \dotsb\otimes c_n \otimes d$ (with
$d\in{\mathcal{D}}_n$) has total degree $|c_0|+\dotsb+|c_n|+|d|$.
Write ${\mathcal{E}}_n$ for the linear span of elements of total
degree $n$.

%%%%%%%%%%%%%%%%%%%%%%%%%%%%%%%%%%%%%%%%%%%%%%%%%%%%%%%%%%%%%%%%%%%%%
\begin{example}\label{F_ex: painted}
% Suppose $\calC=\calD=\Fysym$ and let
 This composition is motivated by a grafting construction on trees.  Let
 $d\times (c_0,\dotsc,c_n) \in {\Fy_n} \times \bigl({\Fy_\Fbb}^{{n+1}}\bigr)$.
 Define $\circ$ by attaching the forest $(c_0,\dots,c_n)$ to the leaves of $d$
 while remembering $d$,
 \[
   \includegraphics{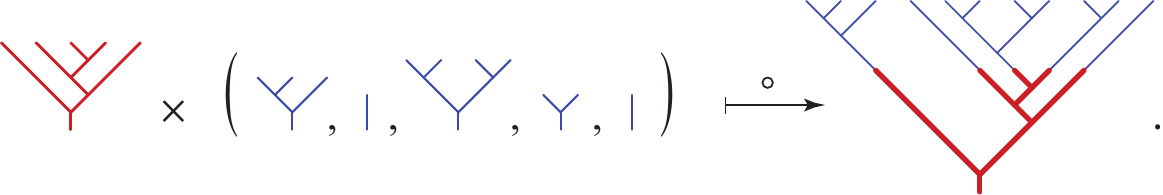}
 \]
 This is precisely the type of tree called a \emph{painted tree} in  Section 2.
 Applying this construction to the indices of basis elements of ${\mathcal{C}}$ and ${\mathcal{D}}$ and
 extending by multilinearity gives ${\mathcal{C}} \circ {\mathcal{D}}$.

\end{example}

Motivated by this example, we represent an decomposable tensor in
${\mathcal{D}}\circ{\mathcal{C}}$ as
\[
    \Fcomposeinline{c_0}{c_n}{d} \qquad\hbox{or}\qquad \Fcompose{c_0}{c_n}{d}
\]
to compactify notation.

%------------------------------------------
\subsection{The coalgebra of painted trees.}

Let $\Fm_n$ be the poset of painted trees on $n$ internal nodes.
Then the vector space $\Fpsym = \Fysym \circ \Fysym$ may be directly
given by:
$$ \Fpsym = \bigoplus_{n\geq0} span~{\Fm_n} $$
We reproduce the compositional coproduct defined in Section 2 of
\cite{F_FLS3}.

%%%%%%%%%%%%%%%%%%%%%%%%%%%%%%%%%%%%%%%%%%%%%%%%%%%%%%%%%%%%
\begin{definition}[Coproduct on $\Fpsym$]\label{F_def: painted is cccc}
 Given a painted tree $p$, define the coproduct in the fundamental
 basis $\bigl\{ F_p \mid p \in \Fm_\Fbb \bigr\}$ by
 \begin{gather*}
        \Delta(F_p)\ =\ \sum_{p \Fpsplit (p_0,p_1)} F_{p_0} \otimes F_{p_1} ,
 \end{gather*}
 where the painting in $p$ is preserved in the splitting $p\Fpsplit(p_0,p_1)$.
\end{definition}

The counit $\varepsilon$ satisfies $\varepsilon(F_p) =
\delta_{0,|p|}$, the Kronecker delta, as usual for graded
coalgebras.

%---------------------------------------------------------
\subsubsection{Primitives in the coalgebras of trees and painted
trees}\label{F_sec: painted primitives} Now for the discussion of
how the lattice structure found by Tamari really impacts the
algebraic structure. Recall that a primitive element $x$ of a
coalgebra is such that $\Delta x = 1\otimes x + x\otimes 1.$ Theorem
2.4 of \cite{F_FLS3} describes the primitive elements of $\Fpsym =
\Fysym \circ \Fysym$ in terms of the primitive elements of $\Fysym$.
We recall the description of primitive elements of $\Fysym$ as given
in~\cite{F_AguSot:2006}. %\index{coalgebra!primitive}

Let $\mu$ be the M\"obius function of $\Fy_n$ which is defined by
$\mu(t,s)=0$ unless $t\leq s$,
\[
   \mu(t,t)\ =\ 1\,, \qquad\mbox{and}\qquad
   \mu(t,r)\ =\ -\sum_{t\leq s< r} \mu(t,s)\,.
\]
We define a new basis for $\Fysym$ using the M\"obius function. For
$t\in\Fy_n$, set
\[
   M_t\ :=\ \sum_{t\leq s} \mu(t,s) F_s\,.
\]
Then the coproduct for $\Fysym$ with respect to this $M$-basis is
still given by splitting of trees, but only at leaves emanating
directly from the right limb above the root:
\[\includegraphics{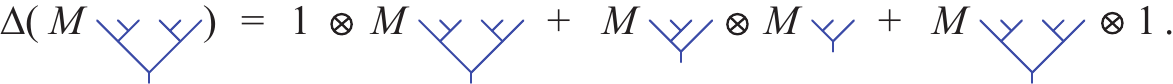}
\]
A tree $t\in\Fy_n$ is \emph{progressive} if it has no branching
along the right branch above the root node. A consequence of the
description of the coproduct in this $M$-basis is Corollary 5.3
of~\cite{F_AguSot:2006} that the set $\{M_t \mid t \hbox{ is
progressive}\}$ is a linear basis for the space of primitive
elements in $\Fysym$.

Now according to Theorem 2.4 of \cite{F_FLS3} the cogenerating
primitives in $\Fpsym$ are of two types:
\[
  \Flrcompose{1}{c_1}{c_{n-1}}{1}{M_t} \qquad\hbox{ and }\qquad \Fzcompose{\,M_t\,}{1} \,,
\]
where $t$ is a progressive tree. Figure~\ref{F_types} shows
examples.
\begin{figure}[htb]
\[ \includegraphics{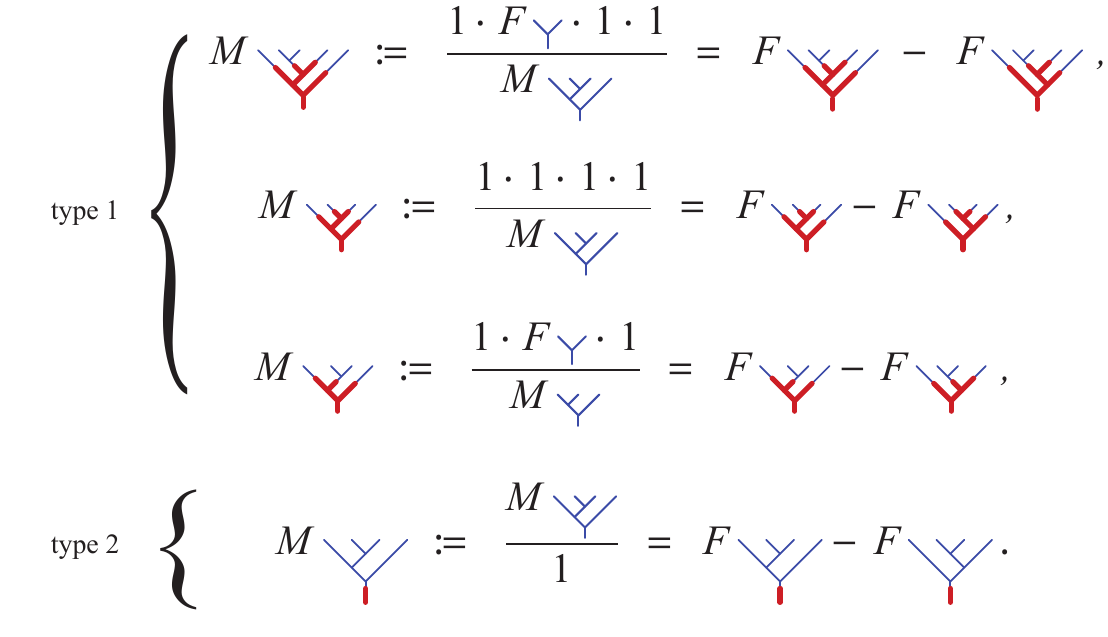}
\]
\caption{Primitive elements of two types in $\Fpsym.$}
\label{F_types}
\end{figure}

The primitives can be described in terms of M\"obius inversion on
certain subintervals of the multiplihedra lattice. For primitives of
the first type, the subintervals are those with a fixed unpainted
forest of the form $(~|\Ftreeseparator t \Ftreeseparator
   \dotsb\Ftreeseparator s\Ftreeseparator |~)$.
For primitives of the second type, the subinterval consists of those
trees whose painted part is trivial, i.e. only the root is painted.
Each subinterval of the first type is isomorphic to $\Fy_m$ for some
$m\leq n$, and the second subinterval is isomorphic to $\Fy_{n}$.
Figure~\ref{F_fig:_multi_sub} shows the multiplihedron lattice for
$\Fm_4$, with these subintervals highlighted.
%%%%%%%%%%%%%%%%%%%%%%%%%%%%%%%%%%%%%%%%%%%%%%%%%%%%%%%%%%%%%%%%%%%%%%%%%%%%%%%%
\begin{figure}[htb]
\[
  \begin{picture}(260,238)(2,2)
   \put(2,5){\includegraphics[height=240pt]{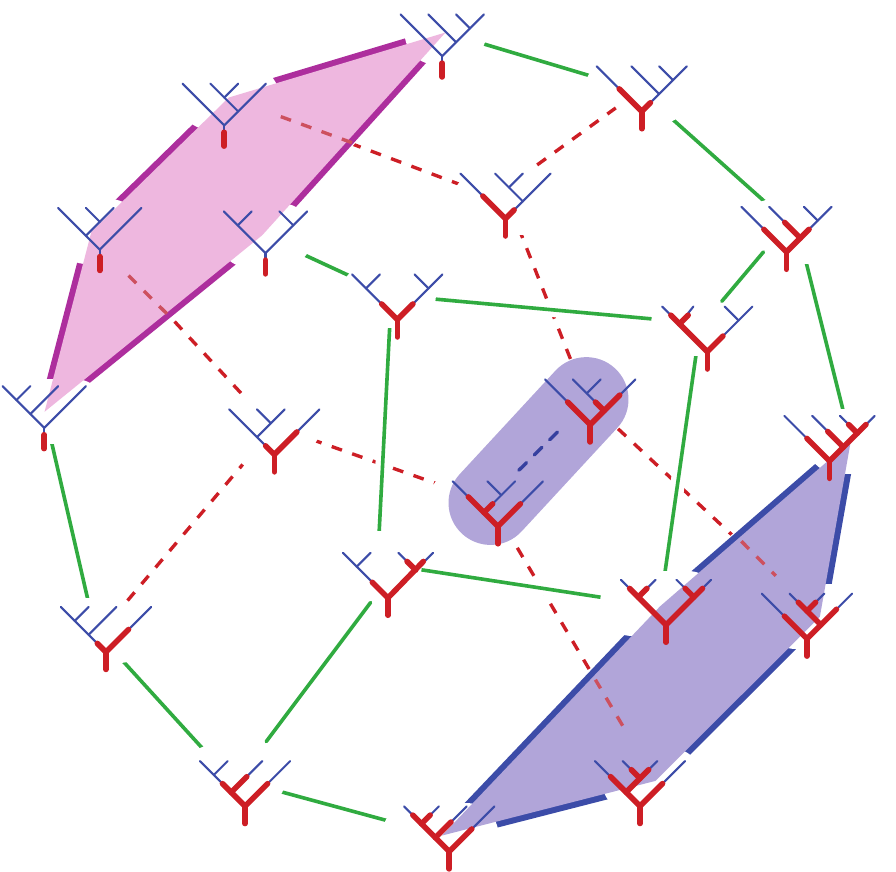}}
   \end{picture}
\]
\caption{The multiplihedron lattice $\Fm_4$ showing the three
subintervals that yield primitives via M\"obius transformation.}
\label{F_fig:_multi_sub}
\end{figure}
%------------------------------------------

%-------------------------------------------------------------------
% Begin BIBLIOGRAPHY
%-------------------------------------------------------------------
%{\small
  %\bibliographystyle{alpha}
\bibliographystyle{amsplain}

\bibliography{F_bibl}
\label{F_sec:biblio}
%}
\end{document}